\newtheorem{theorem}{Theorem}[section]
\newtheorem{corollary}{Corollary}[section]
\newtheorem{lemma}[theorem]{Lemma}
\newtheorem{proposition}[theorem]{Proposition}
\newtheorem{remark}[theorem]{Remark}
\def\given{\typeout{Command 'given' should only be used within bracket command}}
\newcounter{@bracketlevel}
\def\@bracketfactory#1#2#3#4#5#6{
\expandafter\def\csname#1\endcsname##1{%
\addtocounter{@bracketlevel}{1}%
\global\expandafter\let\csname @middummy\alph{@bracketlevel}\endcsname\given%
\global\def\given{\mskip#5\csname#4\endcsname\vert\mskip#6}\csname#4l\endcsname#2##1\csname#4r\endcsname#3%
\global\expandafter\let\expandafter\given\csname @middummy\alph{@bracketlevel}\endcsname
\addtocounter{@bracketlevel}{-1}}%
}
\def\bracketfactory#1#2#3{%
\@bracketfactory{#1}{#2}{#3}{relax}{1mu plus 0.25mu minus 0.25mu}{0.6mu plus 0.15mu minus 0.15mu}
\@bracketfactory{b#1}{#2}{#3}{big}{1mu plus 0.25mu minus 0.25mu}{0.6mu plus 0.15mu minus 0.15mu}
\@bracketfactory{bb#1}{#2}{#3}{Big}{2.4mu plus 0.8mu minus 0.8mu}{1.8mu plus 0.6mu minus 0.6mu}
\@bracketfactory{bbb#1}{#2}{#3}{bigg}{\ref{th:LBk}mu plus 1mu minus 1mu}{2.4mu plus 0.75mu minus 0.75mu}
\@bracketfactory{bbbb#1}{#2}{#3}{Bigg}{4mu plus 1mu minus 1mu}{3mu plus 0.75mu minus 0.75mu}
}
\newcommand{\EE}{\mathbbm{E}}
\newcommand{\wt}{\widetilde}
\newcommand{\var}{\mathrm{Var}}
\newcommand{\cN}{\mathcal{N}}
\newcommand{\cF}{\mathcal{F}}
\newcommand{\IP}{\mathbb{P}}
\newcommand{\E}{\mathbbm{E}}
\newcommand{\al}{\alpha}
\newcommand{\eps}{\varepsilon}
\newcommand\numberthis{\addtocounter{equation}{1}\tag{\theequation}}
\newcommand*{\rom}[1]{\expandafter\@slowromancap\romannumeral #1@}
\newcommand{\PP}
{\mathbbm{P}}
\title{Isolated vertices in two duplication-divergence models with edge deletion}
\author{Tiffany Y. Y. Lo\\
	Department of Mathematics\\
	Stockholm University\\
	lbanovägen 28, SE-106 91 Stockholm, Sweden \\
	\texttt{tiffany.y.y.lo@math.su.se} \\
	\And
	{Gesine Reinert} \\
	Department of Statistics\\
	University of Oxford\\
	4-29 St Giles, Oxford OX1 3LB, UK \\
	\texttt{reinert@stats.ac.uk} \\
	\And
	{Ruihua Zhang} \\
	Department of Statistics\\
	University of Oxford\\
	4-29 St Giles, Oxford OX1 3LB, UK \\
	\texttt{ruihua.zhang@stats.ac.uk} \\
}
\begin{document}
\maketitle
\title{Isolated vertices in two duplication-divergence models with edge deletion}

\begin{abstract}
Duplication-divergence models are a
popular model for the evolution of gene and protein interaction networks.  However, existing duplication-divergence models often neglect realistic features such as loss of interactions. Thus, in this paper we present two novel models that incorporate  random edge deletions into the duplication-divergence framework. 
As in protein-protein interaction networks, with proteins as vertices and interactions as edges, by design isolated vertices 
tend to be rare, our main focus is on the number of isolated vertices; our main result gives lower and upper bounds for the proportion of isolated vertices, when the network size is large. Using these bounds we identify the parameter regimes for which almost all vertices are typically isolated; and also show that there are parameter regimes in which the proportion of isolated vertices can be bounded away from 0 and 1 with high probability. In addition, we find regimes in which the proportion of isolated vertices tends to be small. The proof relies on a standard martingale argument, which in turn requires a careful analysis of the first two moments of the expected degree distribution. 
The theoretical findings are illustrated by simulations, indicating that as the network size tends to infinity, the proportion of isolated vertices can converge to a limit that is neither 0 or 1.
\end{abstract}

\keywords{degree distribution \and duplication-divergence \and edge deletion \and isolated vertices}

\section{Introduction}\label{sec:back}

Networks are fundamental structures for describing complex systems. One example is given by  physical interactions between proteins in an organism, which can be represented by a protein-protein interaction (PPI) networks with proteins as vertices and edges representing physical interactions. In the light of evolution of species, a  natural question is how to model the evolution of such networks. For this purpose, duplication-divergence (DD) models have been explored
\citep{ChungLu2003, Sole2022, Gibson2011}.

The standard duplication-divergence model \cite{ChungLu2003, Bhan2002} 
runs as follows. Starting from the initial graph $G_{m_0}$ on $m_0$ vertices, 
the graph $G_m$, $m\ge m_0$, is generated by first choosing a vertex $v$ uniformly at random in $G_{m-1}$. 
A new vertex $w$ is then added to $G_{m-1}$ and connected to all neighbours of $v$ ({\it a duplication step}). Then, the edges between $w$ and the neighbours of $v$ 
are independently removed, each with probability $p$ ({\it a divergence step}).
\cite{hermann2016} first observed that a birth-catastrophe process is embedded in the evolution of the  expected degree distributions over time $m$, and this connection is subsequently also exploited in \cite{BarbourL2022, jordan2018}.
For $p \ge p^* \approx 0.43286$, 
so that $1-p^*$ is the unique solution of $x e^x=1$, \cite{hermann2016} showed that nearly all vertices become isolated as $m \to \infty$.
When $p> 1-e^{-1}$, \cite{jordan2018} proves that the degree distribution of the connected component has a limit that can be characterised by the stationary distribution of a birth-catastrophe process, and this is  used in \cite{jacquet2020power} to prove a power-law behaviour for the degree distribution.  The $p<p^*$ case is also studied in detail in \cite{BarbourL2022}, proving a central limit theorem for the logarithm of the degree of a  uniformly chosen vertex.  Other features of the model, such as the count of cliques and the degree of a fixed vertex,  are  studied in \cite{frieze2020degree,hermann2016}. 
Modifications to the model include that new vertices may connect to vertices which are not involved in the duplication-divergence step \cite{pastor2003evolving, bebek2006degree}, or they may connect directly to the duplicated vertex \cite{lambiotte2016structural}. For these variants, \cite{BarbourL2022, lls2024} show that if $p$ is large enough, the expected degree distribution converges to a non-negative sequence $(a_i)_{i \geq 0}$ with $\sum_{i\ge 0} a_i=1$ and $a_0<1$,  as $m \to \infty$.

Evolving PPI networks may exhibit not only divergence but also edge loss, for example by protein interactions losing functionality \cite{David2012, Yue2005}. Random edge deletions can significantly affect network behaviour; for instance, \cite{deijfen2009growing} shows that such a mechanism can lead to different asymptotic degree distributions in preferential attachment models. DD models with edge deletions are also considered in \cite{BarbourL2022,hermann2021}, but in a continuous-time setting. Both \cite{BarbourL2022,hermann2021} show that the expected degree distribution of these models are qualitatively similar to that of the standard models. Another model is introduced in \cite{Thornblad2014}, where starting from a single isolated vertex, a vertex $v$ is chosen at random at each step, and a coin toss then determines whether a new vertex $u$ is added and connected to $v$ and its neighbours, or all edges of $v$ (if any) are deleted. Almost sure convergence of the degree distribution is proved in \cite{Thornblad2014} by leveraging the connection between $k$-cliques and the number of vertices of degree $k-1$. However, the clique assumption limits the usefulness of the model for understanding real-world networks. 

Here we generalise the duplication-divergence model in \cite{lambiotte2016structural} by incorporating random edge deletions, hence extending both the
model in \cite{lambiotte2016structural} and that in \cite{Thornblad2014}. We mainly focus on the number of isolated vertices, for the following reason. In organisms, proteins interact in order to achieve biological functionalities. Hence proteins which do not have any interactions with other proteins tend to be rare, and thus the number of isolated vertices can be used to assess DD models, see for example \cite{zhang2024simulating}. 

The paper is structured as follows. Section \ref{sec:newmodel} introduces our two new DD models with edge deletion. Section \ref{sec:results} collates the results and illustrates them with simulations. Our proofs rely heavily on recursion formulas which are derived in Section \ref{sec:recursion}. A second main ingredient to our proofs is a martingale lemma which is given and proved in Section \ref{sec:backhausz}. The key results are then proved in Section \ref{section: convergence0}, and Section \ref{sec:furtherproofs} contains further proofs. Additional simulation results are available in \textbf{the Appendix.}

\section{Two Duplication-Divergence Models with Edge Deletion}
\label{sec:newmodel}

We propose two DD models with edge deletion, each having three parameters $0\le p,q,r \le 1$. For a positive integer $n$ denote $[n]:=\{1,\dots, n\}$.
The main model, {\bf Model A}, is as follows. At time $m_0$,
we start with a simple graph $G_{m_0}$ on $m_0 \ge  1$ vertices with labels $[m_0]$. At integer times $m \ge   m_0$, the graph $G_m$, with vertex set $V(G_m)=[m]$, is generated from $G_{m-1}$ as follows: 

\begin{itemize}
    \item With probability $q$, we perform a \textit{duplication-and-divergence step}: choose a vertex $v$ uniformly at random from $[m-1]$, and add a new vertex labelled $m$ to $G_{m-1}$. Draw an edge between $m$ and $v$, and between $m$ and each neighbour of $v$ (if any).  Independently, erase each edge connecting vertex $m$ and a neighbour of~$v$ with probability $p$, and 
    the edge connecting $m$ and $v$ with probability $1-r$. 
    \item With probability $1-q$, we perform a \textit{deletion-and-addition step}: choose uniformly at random a vertex from $[m-1]$, delete all the edges connected to it, but keep the vertex and its former neighbours. Further, we add an isolated vertex.
\end{itemize}

The motivation for adding an isolated vertex at the deletion-and-addition step is to keep the total number of vertices  fixed at $m$ after $m-m_0$ steps, 
mainly for mathematical tractability. We also consider a variant of Model A, where the number $N_m$ of vertices after $m{-m_0}$ time steps is
random. In this model, {\bf Model B}, the initial network $G_{m_0}$ again consists of $m_0 \ge  1$ vertices with labels $[m_0]$. At integer times $m \ge   m_0$, the graph $G_m$, with vertex set $V(G_m)$, is generated from $G_{m-1}$ as follows:
\begin{itemize}
    \item With probability $q_{m-1}$, we perform a \textit{duplication-divergence step} as in Model A, but with $p$ and $r$ replaced with $p_{m-1}$ and $r_{m-1}$. 
    \item With probability $1-q_{m-1}$, we perform a \textit{deletion step}: erase all edges (if any) of a uniformly chosen vertex, but keep the vertex and its neighbours in the network. 
\end{itemize}
  
In Model B, we do not add any new vertices to the network in a deletion step. Hence $N_m - m_0$ follows a Poisson-binomial distribution, as at each step $m \geq m_0$, a new vertex is added with probability $q_m$, independently of other events. 
In either model, the degree of any vertex in $G_m$ is at most $m-1$.

\begin{remark}
Models A and B are related to models that have been studied elsewhere.
\begin{enumerate}
    \item When $q=1$ and $r=0$, Model A 
    is the standard DD model in \cite{Bhan2002,ChungLu2003}.
    \item When $q=r=1$, Model A  
    is the DD model considered in \cite{lambiotte2016structural,lls2024}.
    \item When $p=q=1$ and $G_{m_0}$ is a single vertex,  Model A 
    can be seen as a Bernoulli bond percolation on random recursive tree, where each edge is removed independently with probability $1-r$; see e.g.\ \cite{bertoin2014sizes}, where the cluster sizes are investigated.  \item  When $p_m=0$, $r_m=1$, and $q_m=q$ for all $m \ge m_0$, and if $G_{m_0} $ consists of isolated cliques, then Model B is the model in \cite{Thornblad2014}.
\end{enumerate}  
\end{remark}

\section{Results}\label{sec:results}

For both models, let $\mathcal{N}_{m,k}$ denote the set of vertices of degree $k$ in $G_m$, with $N_{m,k} = |\mathcal{N}_{m,k}|$ as the count of such vertices, and $N_{m,k}=0$ for $k\ge m$. 
Let $N_m:=\sum^{m-1}_{k=0}N_{m,k}$  be the total number of vertices in $G_m$, and unless specified, $C>0$ is a constant not depending $m$, but it may depend on $p,q,r,m_0$ and $(N_{m_0,j})_{0\le j\le m_0-1}$, and it may vary from one 
instance to another.

If $q = q_m = 0$ for $m \geq m_0$, no new edges are created, and edges of non-isolated vertices are deleted when chosen. In Model A, an isolated vertex is added at each step and remains isolated, ensuring at least $m - m_0$ isolated vertices at time $m$. In Model B, the vertex count is fixed at $m_0$, and thus all vertices are chosen within a finite time almost surely. Consequently, in both
models, $N_{m,0}/N_m \to 0$ almost surely as $m \to \infty$ if $q  = 0$. 
Our results below mainly concern Model A, and include the $q=0$ case. We begin with a bound on the expected number of isolated vertices. The proof, which is standard, is deferred to Section \ref{section: convergence0}.

\begin{theorem} \label{lem:meanbounds}
For Model $A$, assume that $q$ and $r$ are such that 
$ u := 1
-2q(1-r) > 0.$ Then, regardless of the initial graph $G_{m_0}$,
\begin{align*}
    \frac{2(1-q)}{u} - O(m^{-u})\le \EE \left( \frac{N_{m,0}}{m} \right) \le  \frac{3(1-q) + pq(1-r) }{u} + O(m^{-u}). 
\end{align*}
\end{theorem}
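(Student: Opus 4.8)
The plan is to derive a recursion for $\EE(N_{m,0})$ by conditioning on the step type at time $m$, and then to solve this recursion to extract the leading-order constant and the error term. Writing $a_m := \EE(N_{m,0})$, I would track how the number of isolated vertices changes when passing from $G_{m-1}$ to $G_m$. With probability $1-q$ we perform a deletion-and-addition step: we add one guaranteed isolated vertex, and we isolate the uniformly chosen vertex $v$ (it may already be isolated, in which case nothing is gained, but importantly its former neighbours each lose one edge and may themselves become isolated). With probability $q$ we perform a duplication-and-divergence step: the new vertex $m$ is isolated only if \emph{all} its potential edges are erased, which ties its contribution to the degree of the sampled vertex $v$, while the existing isolated vertices are unaffected except that a previously isolated vertex could newly acquire the edge to $m$ (with probability $r$, if $v$ happens to be that isolated vertex, this is irrelevant since an isolated $v$ has no neighbours).

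The key quantity that enters the recursion is therefore $\EE(\text{number of vertices that become isolated at step } m)$, which couples $N_{m,0}$ to the degree-$1$ count $N_{m,1}$ and, through the divergence mechanism, to the full degree profile. I would set up the exact one-step identity
\begin{align*}
\EE(N_{m,0} \mid G_{m-1}) = N_{m-1,0} + (1-q)\clr{1 + \tfrac{1}{m-1}\sum_{w} \bone[\deg w = 1]} + q\cdot(\text{new-vertex isolation prob.}) - q\cdot(\text{loss term}),
\end{align*}
and then take expectations. The coefficient $u = 1 - 2q(1-r)$ in the statement signals that after collecting the terms linear in $N_{m-1,0}$ and in the closely related degree-$1$ count, the recursion takes the form $a_m = (1 - u/(m-1)) a_{m-1} + b_{m-1} + (\text{cross terms})$ for an appropriate forcing $b_{m-1}$. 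Solving a recursion of the shape $a_m = (1 - u/(m-1))a_{m-1} + c + o(1)$ yields $a_m/m \to c/u$ together with an $O(m^{-u})$ correction (this is exactly the Euler-product / comparison-with-$\Gamma$-function estimate that produces the $m^{-u}$ rate), which matches both the target constant and the stated error term.

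The main obstacle is that the recursion for $N_{m,0}$ is \emph{not closed}: the number of vertices newly isolated at a deletion step depends on how many neighbours of the sampled vertex have degree exactly $1$, and the isolation probability of the new vertex at a duplication step depends on the degree of the sampled vertex. This forces me to control auxiliary expectations, principally $\EE(N_{m,1})$ and more generally the first moment of the degree distribution, which is presumably why the abstract emphasizes ``a careful analysis of the first two moments of the expected degree distribution.'' Rather than solve the coupled system exactly, the efficient route to the stated one-sided bounds is to close the recursion via \emph{inequalities}: bound the favorable (isolation-creating) terms from above and the unfavorable (edge-acquiring) terms from below to obtain $a_m \le (1 - u/(m-1))a_{m-1} + \text{(upper forcing)}$, and symmetrically for the lower bound. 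The upper-forcing constant should assemble into $3(1-q) + pq(1-r)$ and the lower into $2(1-q)$, after accounting for the deterministic $+1$ from the added isolated vertex, the contribution from degree-$1$ neighbours being newly isolated, and the probability $p(1-r)$-type terms governing when a freshly duplicated vertex is born isolated.

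Once the two one-sided linear recursions are in place, the remaining step is routine: apply the standard recursion-solving estimate (I would invoke or reprove the elementary lemma that $a_m = (1 - u/(m-1))a_{m-1} + c_m$ with $c_m \to c$ gives $a_m = (c/u)\,m + O(m^{1-u}) + O(1)$, hence $\EE(N_{m,0}/m) = c/u + O(m^{-u})$) to each bound and read off the constants. I expect the bookkeeping of exactly which degree-$1$ and divergence terms land on which side of the inequality to be the delicate part of the write-up, but no single step should require heavy machinery beyond the moment control already promised in Section \ref{sec:recursion}.
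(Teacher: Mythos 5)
Your proposal matches the paper's proof: the paper derives exactly the one-step recursion you describe (Proposition \ref{prop:recursion}, equation \eqref{eq:new_nm0_diff}), notes that with $N_{m-1}=m-1$ the coefficient of $N_{m-1,0}$ is $1-u/(m-1)$, closes the non-closed forcing term via the elementary bounds $N_{m-1,1}/(m-1)\le 1$ and $\sum_{j\ge 1}p^jN_{m-1,j}/(m-1)\le p$ to sandwich it between $2(1-q)$ and $3(1-q)+pq(1-r)$, and then solves the resulting linear recursion to get the $O(m^{-u})$ correction. Your only slight overestimate is the difficulty: no control of $\EE N_{m,1}$ or of the degree-distribution moments is actually needed here (that machinery is reserved for the concentration results), since the trivial bounds above already suffice.
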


We also establish lower and upper bounds on $N_{m,0}/m$ in Model A, whose proof in Section \ref{section: convergence0} is an adaptation of the martingale argument in \cite{Backhausz, Thornblad2014} that is used to show concentration of degree proportions. To state the theorems we introduce 
    \begin{align} \tau&:=\tau(p,q)=6q-4pq-3+p^2q; \label{tau}\\
       \kappa &:=\kappa(p,q)= 4 q - 2 p q - 2;  \label{kappa}\\
        \label{de:rho}
         \rho_0&:=\rho_0(q,r)=(1-q)/\{ 1-q(1-r)\};\\ 
          \rho_1&:= \rho_1(p,q,r)= \{ 3 (1-q) + pq(1-r) \} / \{ 2 [1 - q(1-r)]\};  
          \label{rho1} \\
\theta_0&:=\theta_0(q,r)=\{ 2(1-q)+q(1-r)\}/\{ 2-q(1-r)\}; \label{theta}\\
\theta_1&:=\theta_1(q,r)=\{3(1-q)+q(1-r)\} /\{2-q(1-r)\} \label{theta1}
          .
    \end{align}

\begin{theorem}
\label{th:LBk} In Model A, suppose that $m_0\ge 2$ and the parameters $p,q,r$ are such that $q \leq 
   \min \{1, 1/(2(1-r))\}$ and $\tau<1$. Then, regardless of the initial graph $G_{m_0}$, there exist constants $C>0$ and $0<\delta<\min\{1,1-\max\{\tau,\kappa\}\}$ such that {with probability at least $1-Cm^{-\delta/2}$,}
     \begin{align}\label{eq:propLB}
     \rho_0-Cm^{-\delta/4} \le \frac{N_{m,{0}}}{m} \le \rho_1 + {Cm^{-\delta/4}}.
     \end{align}
\end{theorem}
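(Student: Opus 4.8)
The plan is to reduce \eqref{eq:propLB} to two one-sided concentration estimates, a lower one producing $\rho_0$ and an upper one producing $\rho_1$, each obtained by feeding a one-step drift inequality into the martingale lemma of Section~\ref{sec:backhausz}. Everything rests on the recursion for $N_{m,0}$ derived in Section~\ref{sec:recursion}. Conditioning on $\cF_{m-1}$ and counting, step by step, which vertices change their isolation status, gives
$$\EE[N_{m,0}\mid \cF_{m-1}] = \left(1-\frac{u}{m-1}\right)N_{m-1,0} + 2(1-q) + q(1-r)A_{m-1} + (1-q)B_{m-1},$$
where $u=1-2q(1-r)$, $A_{m-1}=\frac{1}{m-1}\sum_{w:\,d_w\ge 1}p^{d_w}$ with $d_w$ the degree of $w$ in $G_{m-1}$, and $B_{m-1}=N_{m-1,1}/(m-1)$. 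The drift coefficient $-u$ collects the two deterministic ways an isolated vertex is lost or gained: duplicating an isolated $v$ removes it with probability $r$, while a deletion step makes the chosen vertex isolated with probability $1-N_{m-1,0}/(m-1)$; note that the hypothesis $q\le 1/(2(1-r))$ is exactly $u\ge 0$. The term $A_{m-1}$ records a freshly duplicated vertex landing isolated, and $B_{m-1}$ the degree-$1$ neighbours of a deleted vertex that become isolated.

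Both auxiliary terms are non-negative, and using only $p^{d_w}\le p$ for $d_w\ge 1$ and $N_{m-1,1}\le m-1$ they satisfy $0\le A_{m-1}\le p$ and $0\le B_{m-1}\le 1$. Dropping them gives the lower drift inequality $\EE[N_{m,0}\mid\cF_{m-1}]\ge (1-\tfrac{u}{m-1})N_{m-1,0}+2(1-q)$, and replacing them by their upper bounds gives $\EE[N_{m,0}\mid\cF_{m-1}]\le (1-\tfrac{u}{m-1})N_{m-1,0}+3(1-q)+pq(1-r)$. The fixed point of the affine recursion $a_m=(1-\tfrac{u}{m-1})a_{m-1}+c$ is $c/(1+u)$, and since $1+u=2\{1-q(1-r)\}$ the two constants $2(1-q)$ and $3(1-q)+pq(1-r)$ give exactly $\rho_0$ and $\rho_1$ of \eqref{de:rho} and \eqref{rho1}. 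Thus the lower (resp.\ upper) one-sided version of the martingale lemma, applied to $N_{m,0}$, should output $N_{m,0}/m\ge \rho_0-Cm^{-\delta/4}$ (resp.\ $N_{m,0}/m\le \rho_1+Cm^{-\delta/4}$) with the stated probability.

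The one non-routine hypothesis of the martingale lemma is a bound on the conditional increment variance $\EE[(N_{m,0}-N_{m-1,0})^2\mid\cF_{m-1}]$, and this is the main obstacle. In a duplication-and-divergence step $|N_{m,0}-N_{m-1,0}|\le 1$, but in a deletion step
$$N_{m,0}-N_{m-1,0}=1+\bone\{d_u\ge 1\}+\big|\{w\sim u:\,d_w=1\}\big|,$$
where $u$ is the deleted vertex, and the last term is only bounded by $d_u$, which can be of order $m$. Hence bounded-difference (Azuma) estimates are unavailable and a genuine second-moment input is required. I would bound the conditional second moment of the increment by $O(1)$ plus $\frac{1}{m-1}\sum_u d_u^2$-type sums, and control $\EE[\sum_u d_u^2]$ (equivalently, expected path/cherry counts) through the second-moment recursions of Section~\ref{sec:recursion}.

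The drift exponents of those second-moment recursions are precisely $\tau$ and $\kappa$ of \eqref{tau}--\eqref{kappa}; since $3p^2-4p+2>0$ for all $p$, one checks that the single hypothesis $\tau<1$ already forces $\kappa<1$, so $\max\{\tau,\kappa\}<1$ and the range for $\delta$ is non-empty. Solving the recursions then yields growth $O(m^{1+\max\{\tau,\kappa\}})$ for the relevant second moments, whence $\EE[(N_{m,0}-N_{m-1,0})^2\mid\cF_{m-1}]$ is summable against the martingale weights to give $\Var(N_{m,0})=O(m^{2-\delta})$ for every $\delta<\min\{1,1-\max\{\tau,\kappa\}\}$. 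Chebyshev's inequality at scale $t=m^{-\delta/4}$ then returns a failure probability of order $\Var(N_{m,0})\,m^{-2}\,m^{\delta/2}=O(m^{-\delta/2})$, which is exactly what the martingale lemma packages. Combining the two one-sided estimates yields \eqref{eq:propLB}, with $C$ and the admissible $\delta$ depending on $p,q,r,m_0$ and the initial degree counts, as claimed.
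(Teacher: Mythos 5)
Your proposal is correct and takes essentially the same route as the paper: the same one-step recursion for $N_{m,0}$ with drift coefficient $1-u/(m-1)$ and inhomogeneous term sandwiched between $2(1-q)$ and $3(1-q)+pq(1-r)$, the same reduction of the increment second moment to $\psi_2$ via the degree-distribution recursions with exponents $\tau$ and $\kappa$ (including the observation that $\tau<1$ forces $\kappa<1$), and the same martingale/Chebyshev packaging that delivers the $m^{-\delta/4}$ deviation with failure probability $O(m^{-\delta/2})$. The only cosmetic difference is that you verify $\tau<1\Rightarrow\kappa<1$ by the positivity of $3p^2-4p+2$ rather than by maximizing $(16-8p)/(6-4p+p^2)$ as in Remark \ref{tk}.
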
 

The condition $m_0\ge 2$ streamlines the proof but is inconsequential since the result is valid for any $G_{m_0}$. For certain $(p,q,r)$ , $\rho_0=0$ or $\rho_1\ge 1$; 
we include these trivial cases in the theorem for completeness. As examples, for $r\ge 1/2$, $\rho_0=0$ in the case $q=1$; and for $r=1$,
$\rho_1\ge 1$  if $q\le 1/3$. When $r=1$, $\rho_1-\rho_0=(1-q)/2$, thus giving a good estimate of $N_{m,0}/m$ when $m,q$ are large enough; see Figure \ref{fig:modelA}. It is also possible to improve the bounds above for certain $(p,q,r)$, but we do not attempt this here.

\begin{remark} We collect the following observations for the theorem above. 
    \begin{enumerate}
    \item Figure \ref{fig:pqr}  
    illustrates the region for which $\tau < 1$, and also the region for which $q \le 1/(2(1-r))$ when $r <1$. The figure shows that the two inequalities do not trivially imply each other; they also illustrate the large range for which Theorem \ref{th:LBk} can be applied. 
   \begin{figure}[!tpb]
   \centering
    \includegraphics[width=\textwidth, height=0.33\textwidth]{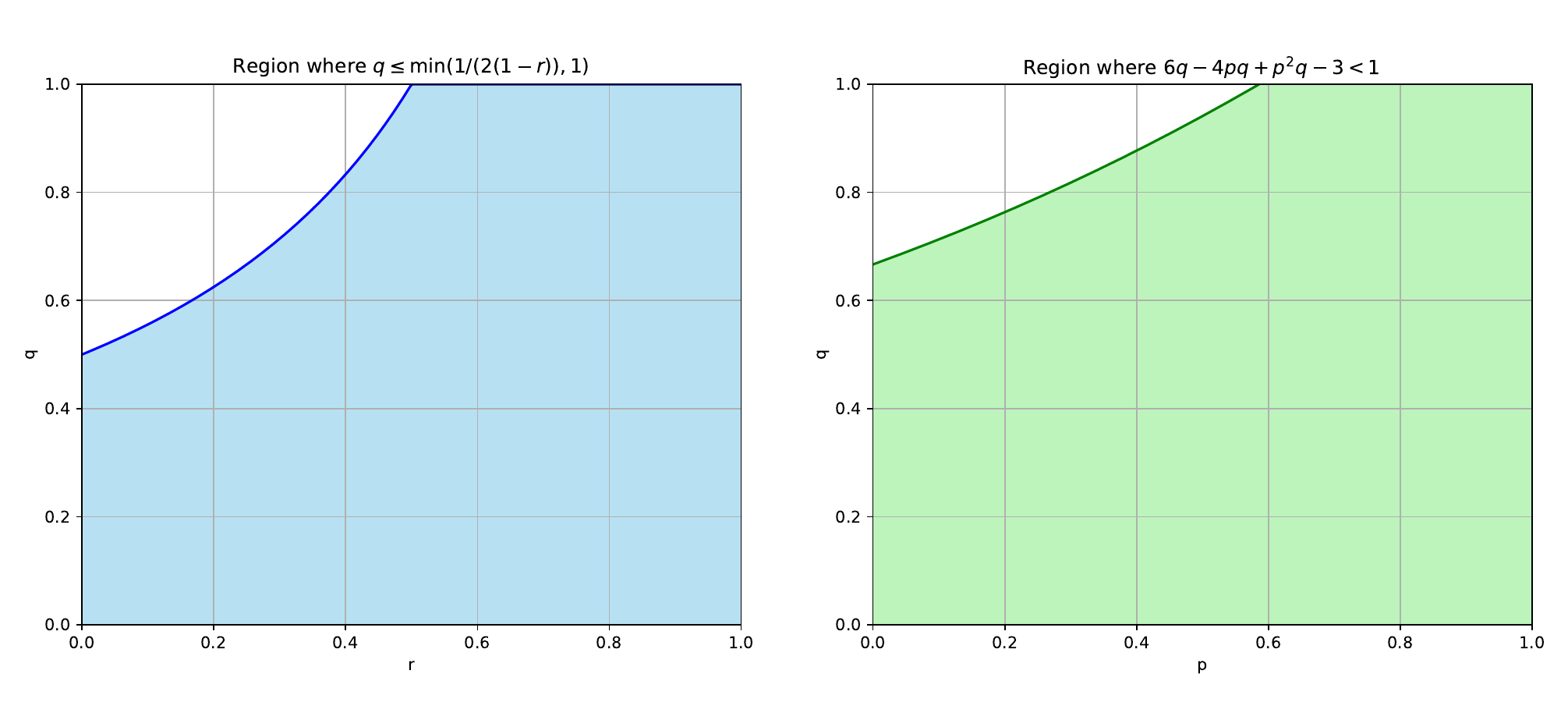}
    \caption{\small{The regions for which the conditions of Theorem \ref{th:LBk} are satisfied. The blue region on the left shows the values of $q,r$ such that $q\le 1/(2(1-r))$, and the green region on the right shows values of $p,q$ such that $\tau<1$.}}
    \label{fig:pqr}
    \vspace{-5mm}
\end{figure}

\item If $r=0$, then  Theorem \ref{th:LBk}  requires $q\le 1/2$, and so the theorem does not cover the standard DD model (with $r=0$ and $q=1$) considered in \cite{BarbourL2022,hermann2016, jordan2018}.

 \item  If $q=r=1$, then Model A is the DD model studied in \cite{lambiotte2016structural,lls2024}. In this case, $N_{m,0}/m\to 0$ a.s.\ as $m\to\infty$, regardless of the initial graph. This follows from the fact that every newly created vertex has at least one edge after the duplication-divergence step, and these edges will not be erased in the subsequent steps.
 
 \item 
 The bound \eqref{eq:propLB} does not hold for all choices of $p,q$ and $r$. For example, if $q=1$ and $p=r=0$, a choice which does not satisfy the assumptions of Theorem \ref{th:LBk},
 the limiting behaviour depends on the initial graph $G_{m_0}$. 
 To see this, if $q=1$ and $p=0$ then every step is a duplication step, all edges to neighbouring vertices are kept, and no edges are ever deleted.
So if $r=0$, and $G_{m_0}$ is a collection of isolated vertices, the graph $G_m$ will consist of $m$ isolated vertices. In contrast, if $G_{m_0}$ is a complete graph, then every vertex will have degree at least $m_0-1$. 
\end{enumerate}
\end{remark}

We can also find $(p,q,r)$ for which the network typically consists of almost all isolated vertices, when the network size $m$ is large, and $(p,q,r)$ for which the proportion of non-isolated vertices is typically non-zero as $m\to\infty$. The next corollary follows immediately from Theorem \ref{th:LBk}.

\begin{corollary}\label{cor}
    For Model A, suppose that $m_0\ge 2$ and that $(p,q,r)$ satisfy the assumptions in Theorem \ref{th:LBk}. The following statements hold regardless of the initial network $G_{m_0}$.
   \begin{enumerate}
       \item  Assume  further  
    $r=0$ and $q \le 1/2$. Then $\rho_0=1$ in \eqref{de:rho}, and
   there exist constants $C>0$ and $0<\delta<\min\{1,1-\max\{\tau,\kappa\}\}$ such that  $ N_{m,0}/m \ge   1-Cm^{-\delta/4}$ with probability at least $1-Cm^{-\delta/2}$.
   \item \label{CU}
   Assume further
    $q>1/(3-(p+2)(1-r))$. Then $\rho_1<1$ in  \eqref{rho1}, and 
    there exist constants $C>0$ and $0<\delta<\min\{1,1-\max\{\tau,\kappa\}\}$ such that $ N_{m,0}/m < \rho_1 + Cm^{-\delta/4}$ with probability at least $1-Cm^{-\delta/2}$.
      \end{enumerate}
\end{corollary}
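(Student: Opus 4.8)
The plan is to obtain both statements directly from Theorem \ref{th:LBk}, treating each part as a one-line evaluation of the relevant endpoint ($\rho_0$ for part~1, $\rho_1$ for part~2) followed by inheriting the concentration bound \eqref{eq:propLB} verbatim. Because the corollary already assumes that $(p,q,r)$ satisfy the hypotheses of Theorem \ref{th:LBk}, the constants $C>0$ and $0<\delta<\min\{1,1-\max\{\tau,\kappa\}\}$, together with the high-probability event on which \eqref{eq:propLB} holds, are supplied by the theorem; no new probabilistic work is needed, and only the algebra of the endpoints remains.

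For part~1, I would substitute $r=0$ into the definition \eqref{de:rho}, giving $\rho_0=(1-q)/\{1-q(1-0)\}=(1-q)/(1-q)=1$ for $q<1$ (the value $q=1$ being excluded by the assumption $q\le 1/2$). It is also worth recording that with $r=0$ the theorem's constraint $q\le\min\{1,1/(2(1-r))\}$ reduces to exactly $q\le 1/2$, so the corollary's extra hypothesis is precisely what keeps us inside the scope of Theorem \ref{th:LBk}. Feeding $\rho_0=1$ into the lower bound of \eqref{eq:propLB} then yields $N_{m,0}/m\ge 1-Cm^{-\delta/4}$ on the same event of probability at least $1-Cm^{-\delta/2}$, which is the claim.

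For part~2 the core computation is the equivalence between $\rho_1<1$ and $q>1/\{3-(p+2)(1-r)\}$. Since $q(1-r)\le 1/2<1$ under the hypotheses, the denominator $2[1-q(1-r)]$ in \eqref{rho1} is positive, so $\rho_1<1$ is equivalent to $3(1-q)+pq(1-r)<2[1-q(1-r)]$; rearranging gives $1<q\{3-(p+2)(1-r)\}$. The one subtlety to flag is the sign of the factor $3-(p+2)(1-r)$: from $p\le 1$ and $r\ge 0$ one has $(p+2)(1-r)\le 3$, so this factor is nonnegative and vanishes only at the corner $(p,r)=(1,0)$, where the stated threshold is vacuous and can be discarded. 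Away from that corner the factor is strictly positive, dividing through gives the threshold $q>1/\{3-(p+2)(1-r)\}$, and this assumption forces $\rho_1<1$. Substituting into the upper bound of \eqref{eq:propLB} then gives $N_{m,0}/m\le \rho_1+Cm^{-\delta/4}$ with $\rho_1<1$, i.e.\ a bound strictly away from $1$, on the high-probability event.

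The main point to be careful about is thus purely bookkeeping rather than analysis: confirming the positivity of the denominators and of the factor $3-(p+2)(1-r)$, and checking that the additional constraints in each part are compatible with the hypotheses of Theorem \ref{th:LBk}. As these are all elementary algebraic verifications, the corollary follows immediately from the theorem, as claimed.
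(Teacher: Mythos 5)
Your proposal is correct and matches the paper's treatment: the paper simply states that the corollary "follows immediately from Theorem \ref{th:LBk}," and your endpoint algebra ($\rho_0=1$ when $r=0$, and the equivalence $\rho_1<1\iff q\{3-(p+2)(1-r)\}>1$ with the sign check on that factor) is exactly the bookkeeping needed to justify that.
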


 Item 1 in Corollary \ref{cor}
 holds regardless of the choice of the edge deletion probability $p$. This is in contrast to the standard DD model (with $q=1$ and $r=0$) considered in \cite{BarbourL2022,hermann2016, jordan2018}, where almost all vertices are isolated only if $p\in[p^*,1]$, where $p^*\approx 0.43286$. An example of $(p,q,r)$ that satisfies the assumption of Item 2 in Corollary \ref{cor} 
 is $r=1$, $q\ge 1/3$, and then $p$ can be any value in $[0,1]$.

When $p=1$,  bounds tighter than $\rho_0$ and $\rho_1$ in Theorem \ref{th:LBk} can be achieved. In Theorem \ref{th:LBK1}, note that $\theta_0=1$ if $q=0$ or (and) $r=0$. Since $p=1$, in these cases, in fact $N_{m,0}/m\to 1$ almost surely as $m\to\infty$. The proof is similar to that of Theorem \ref{th:LBk} and is deferred to Section \ref{section: convergence0}.

\begin{theorem}[The $p=1$ case]\label{th:LBK1}
    For Model A, suppose that $m_0\ge 2$ and $p=1$.
    Then, regardless of the initial network $G_{m_0}$, for any $0<\delta<1$, there exists a constant $C>0$  such that with probability at least $1-Cm^{-\delta/2}$,
     \begin{align}\label{eq:propLB1}
        \theta_0 - Cm^{-\delta/4}\le  \frac{N_{m,0}}{m} \le \theta_1 + Cm^{-\delta/4}.
     \end{align}
\end{theorem}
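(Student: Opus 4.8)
The plan is to combine the one-step recursion for the isolated-vertex count with the martingale lemma of Section \ref{sec:backhausz}, exploiting that when $p=1$ every newly created vertex has degree at most one. Let $\cF_{m-1}$ denote the $\sigma$-field generated by $G_{m_0},\dots,G_{m-1}$. With $p=1$, in a duplication-divergence step the new vertex $m$ retains only the edge to the chosen vertex $v$, and only with probability $r$; hence $m$ is isolated exactly when that edge is erased (probability $1-r$), while $v$ leaves the isolated set precisely when it was isolated and the edge is kept. In a deletion step the new vertex is isolated, the chosen vertex $u$ becomes isolated if it was not already, and every degree-one neighbour of $u$ becomes isolated as well. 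Averaging over the uniform choices of $v$ and $u$ and using $\sum_u |\{w\sim u:\deg w=1\}|=N_{m-1,1}$, I obtain
\begin{align*}
\EE(N_{m,0}\mid \cF_{m-1}) = \Bigl(1 - \frac{1-q(1-r)}{m-1}\Bigr) N_{m-1,0} + q(1-r) + 2(1-q) + (1-q)\frac{N_{m-1,1}}{m-1}.
\end{align*}

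Since $0\le (1-q)N_{m-1,1}/(m-1)\le 1-q$, this recursion is sandwiched between two linear recursions with common drift coefficient $c:=1-q(1-r)$ and additive constants $b_0:=q(1-r)+2(1-q)$ and $b_1:=b_0+(1-q)$, whose fixed proportions $b_0/(1+c)$ and $b_1/(1+c)$ are exactly $\theta_0$ and $\theta_1$. Writing $\phi_m:=\prod_{j=m_0+1}^m (1-c/(j-1))^{-1}$, which is positive for $m_0\ge 2$ and of order $m^{c}$, a short computation using $b_i=(1+c)\theta_i$ shows that $\phi_m(N_{m,0}-\theta_0 m)$ is a submartingale and $\phi_m(N_{m,0}-\theta_1 m)$ is a supermartingale, the additive constants cancelling at the two fixed points. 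Decomposing each process into its initial value, a martingale part $M_m=\sum_j \phi_j\bigl(N_{j,0}-\EE(N_{j,0}\mid\cF_{j-1})\bigr)$, and a monotone drift of the correct sign reduces the two inequalities in \eqref{eq:propLB1} to bounding the downward, respectively upward, fluctuations of $M_m/\phi_m$.

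These fluctuation bounds are precisely what the martingale lemma of Section \ref{sec:backhausz} supplies, so the remaining and main task, and the only real work, is to verify its hypothesis: a bound on the conditional second moment of the increment $N_{m,0}-\EE(N_{m,0}\mid\cF_{m-1})$. In a duplication step the increment lies in $\{-1,0,1\}$, but in a deletion step it equals $1+\bone\{u\text{ non-isolated}\}+L_u$, where $L_u$ is the number of degree-one neighbours of $u$, and $L_u$ is \emph{not} bounded by a constant; this is the main obstacle. I would bound the conditional second moment by $C + C(m-1)^{-1}\sum_u L_u^2$ and, using $L_u\le \deg u$, by $C + C(m-1)^{-1}\sum_v (\deg v)^2$. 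The expected degree second moment $\EE\sum_v(\deg v)^2$ satisfies its own linear recursion, derived from Section \ref{sec:recursion}, of the form $(1+\tau(1,q)/(m-1))$ times the previous value plus a bounded term; since $\tau(1,q)=3q-3\le 0<1$ and likewise $\kappa(1,q)=2q-2\le 0$, this second moment stays of order $m$ with no restriction on $q,r$, so the conditional variance of the increment is $O(1)$.

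Feeding the resulting $O(m)$ bound on $\EE\sum_v(\deg v)^2$, hence $\EE M_m^2=O\bigl(\sum_j \phi_j^2\bigr)=O(m^{2c+1})$ and $\EE[(N_{m,0}-\theta_i m)^2]=O(m)$, into a Chebyshev (or Doob $L^2$ maximal) bound converts the martingale fluctuation bounds into the high-probability statement: for every $0<\delta<1$ the centred process deviates by more than $m^{1-\delta/4}$ with probability at most $Cm^{-\delta/2}$, the initial graph $G_{m_0}$ entering only through the constant $C$. Combining the lower and upper sandwiching recursions then yields \eqref{eq:propLB1}. The crux throughout is the subcriticality of the second-moment recursion, which for general $p$ requires $\tau<1$ but for $p=1$ holds automatically, explaining why Theorem \ref{th:LBK1} needs neither $\tau<1$ nor $q\le 1/(2(1-r))$.
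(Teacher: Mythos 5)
Your proposal is correct and follows essentially the same route as the paper: the same simplified recursion at $p=1$ with drift $u'=1-q(1-r)$ and additive term sandwiched between $q(1-r)+2(1-q)$ and $q(1-r)+3(1-q)$, the same martingale lemma, and the same observation that $\tau(1,q)=3q-3\le 0$ and $\kappa(1,q)=2q-2\le 0$ make the degree second-moment recursion subcritical (the paper simply cites Proposition \ref{prop:isolatedbound} and Lemma \ref{lemmafirstmoment} where you re-derive these bounds). The only quibble is that at $q=1$ the bound on $\sum_v(\deg v)^2$ carries a $\log^2 m$ factor rather than being $O(m)$, which is harmless since any $0<\delta<1$ still works in \eqref{eq:sdc}.
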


Note that if $p=0$ and $q=r=1$, then any newly added vertex has degree at least one, so $\lim_{m\to\infty}N_{m,0}/N_m=\lim_{m\to\infty}N_{m,0}/m=0$ almost surely. The next proposition covers this case and more general $r$. The proof, which is standard, can be found in Section \ref{sec:furtherproofs}.

\begin{proposition} [The $p=0, q=1$ case]\label{prop:modelB}
For Model A with $p=0$, $q=1$, and $r\le1$, $m^{-(1-2r)}N_{m,0}$ converges almost surely to a finite limit as $m \to \infty$. Additionally, if $r \ge 1/2$, then $N_{m,0}$ is bounded in expectation as $m\to\infty$; and if $r  >0$, then $N_{m,0}/m$
 converges to 0 in probability.
\end{proposition}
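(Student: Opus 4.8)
The plan is to exploit the fact that when $p=0$ and $q=1$ the evolution of the isolated vertices decouples completely from the rest of the degree sequence, so that $(N_{m,0})_{m\ge m_0}$ is a one-dimensional time-inhomogeneous Markov chain with an exactly linear drift. First I would record the dynamics. Since $p=0$, every edge drawn from the new vertex $m$ to a neighbour of the chosen vertex $v$ is retained, and since $q=1$ every step is a duplication-and-divergence step; in particular no pre-existing edge is ever deleted, so a non-isolated vertex can never become isolated. Hence the only transitions that matter are: if the chosen $v$ is isolated (probability $N_{m-1,0}/(m-1)$), then with probability $r$ the edge $\{m,v\}$ is kept, turning both $v$ and $m$ non-isolated and decreasing the count by one, while with probability $1-r$ that edge is erased, leaving $v$ isolated and $m$ isolated and increasing the count by one; if $v$ is non-isolated, then $m$ inherits at least one edge and the count is unchanged. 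Writing $\cF_{m-1}$ for the natural filtration, this yields the one-step identity
\begin{equation*}
\EE[N_{m,0}\mid \cF_{m-1}] = N_{m-1,0}\clr{1 + \frac{1-2r}{m-1}}, \qquad m > m_0.
\end{equation*}

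Next I would convert this into a martingale. Setting $b_{m} = \prod_{k=m_0}^{m-1}\clr{1+\frac{1-2r}{k}}$ (with $b_{m_0}=1$), the process $M_m := N_{m,0}/b_m$ is a non-negative martingale with respect to $(\cF_m)$. A routine $\log$-expansion of the product, or the ratio asymptotic $\Gamma(m+1-2r)/\Gamma(m)\sim m^{1-2r}$, shows $b_m = C\, m^{1-2r}(1+o(1))$ for a positive constant $C$. By the martingale convergence theorem $M_m$ converges almost surely to a finite limit $M_\infty$, whence $m^{-(1-2r)}N_{m,0} = \clr{b_m\, m^{-(1-2r)}}M_m \to C M_\infty$ almost surely, which is the first assertion.

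The two moment statements then follow by taking expectations in the recursion. Iterating gives $\EE[N_{m,0}] = N_{m_0,0}\,\prod_{k=m_0}^{m-1}(1+\tfrac{1-2r}{k}) = N_{m_0,0}\, b_m$. When $r\ge 1/2$ we have $1-2r\le 0$, so every factor is at most one and $\EE[N_{m,0}]\le N_{m_0,0}$, giving boundedness in expectation. When $r>0$ I instead use $b_m = C m^{1-2r}(1+o(1))$ to get $\EE[N_{m,0}/m] = C m^{-2r}(1+o(1)) \to 0$, and Markov's inequality yields $N_{m,0}/m\to 0$ in probability. (The same conclusion already follows almost surely from the first part, since $b_m/m\to 0$ whenever $r>0$.)

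I expect no serious analytic obstacle: once the decoupling is justified, the rest is product asymptotics together with the martingale convergence theorem. The one point demanding care is the verification underpinning the drift identity, namely that with $p=0,q=1$ no previously non-isolated vertex can revert to being isolated and that an isolated vertex's fate is governed solely by whether it is selected and whether its single new edge survives, together with minor boundary bookkeeping (small $m_0$, and the degenerate factor arising when $1-2r=-1$), which I would handle by starting the product at an index at which all factors are strictly positive.
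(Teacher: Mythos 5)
Your proposal is correct and follows essentially the same route as the paper: the same one-step drift identity, the same normalising product (your $b_m$ is the reciprocal of the paper's $\alpha_m$), martingale convergence plus Stirling asymptotics for the almost sure statement, and the iterated expectation with Markov's inequality for the moment claims. Your added remark that $N_{m,0}/m\to 0$ even almost surely when $r>0$, and your care with the vanishing factor when $r=1$ and $m_0=1$, are both sound minor refinements.
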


While the above results are bounds, the next result fully addresses the existence of $\lim_{m \to \infty} N_{m,k}/m$ for $k \geq 0$ when $p = q = 1$. 
In this case, the new vertex undergoes only the duplication step.
If the initial network is a single isolated vertex and $r = 1$, the model is the random recursive tree, where it is well-known that $\lim_{m \to \infty} N_{m,j}/m = 2^{-j}$ almost surely for all $j \geq 1$; see \cite{bollobas2001degree}. Instead of using the argument from \cite{bollobas2001degree}, we follow that of \cite{Backhausz, Thornblad2014} to extend this result to $r < 1$, providing a new proof that highlights the challenges when $p, q < 1$. The proof is found in Section \ref{sec:furtherproofs}.

\begin{theorem}[The $p=0, q=1$ case]\label{thm: pq1case} 
    For Model A with  
    $p=q=1$ and $0\le r\le 1$, we have
    for all $k\geq 0$, 
\begin{equation}\label{eq:pq1lim}
        \lim_{m\to\infty} \frac{N_{m,k}}{m} = a_k\quad \text{almost surely,}
    \end{equation}
    where 
    \begin{align}\label{ak}
        a_0 = \frac{1-r}{1+r}\quad\text{and}\quad a_k = \bbclr{\frac{r}{1+r}}^k(1+a_0), \quad k\ge 1, \quad \text{with}\quad \sum_{k\geq 0}a_k=1.
    \end{align}
\end{theorem}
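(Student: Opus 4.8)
The plan is to exploit the drastic simplification of the dynamics when $p=q=1$. In this regime every step is a duplication-and-divergence step, and because $p=1$ every edge from the new vertex $m$ to the neighbours of the chosen vertex $v$ is erased, while the edge to $v$ itself is retained with probability $r$. The model therefore collapses to the following elementary growth rule: at step $m$ a new vertex is added which, with probability $r$, attaches to a uniformly chosen $v\in[m-1]$ (raising the degrees of both $m$ and $v$ by one), and with probability $1-r$ stays isolated; no other degree ever changes. First I would record the one-step conditional recursions that this rule induces, with respect to the natural filtration $\cF_m=\sigma(G_{m_0},\dots,G_m)$, namely
\begin{align*}
\EE[N_{m,0}\mid\cF_{m-1}] &= \bclr{1-\frac{r}{m-1}}N_{m-1,0} + (1-r),\\
\EE[N_{m,k}\mid\cF_{m-1}] &= \bclr{1-\frac{r}{m-1}}N_{m-1,k} + \frac{r}{m-1}N_{m-1,k-1} + r\bone[k=1],\qquad k\ge 1,
\end{align*}
which are the $p=q=1$ specialisation of the recursions derived in Section \ref{sec:recursion}. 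Since exactly one vertex is added per step we have $N_m=m$, and a single step changes any $N_{m,k}$ by at most two, so the increments are uniformly bounded.

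Next I would identify the candidate limits as the fixed point of these recursions. Inserting the ansatz $N_{m,k}\approx a_k m$ and letting $m\to\infty$ converts the recursions into the stationary system $(1+r)a_0=1-r$ and $(1+r)a_k=ra_{k-1}+r\bone[k=1]$ for $k\ge1$. Solving this triangular system gives exactly $a_0=(1-r)/(1+r)$ and $a_k=(r/(1+r))^k(1+a_0)$ as in \eqref{ak}, and summing the resulting geometric series confirms $\sum_{k\ge0}a_k=1$, consistent with $\sum_k N_{m,k}/m\equiv1$.

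The core of the argument is an induction on $k$ driven by the martingale lemma of Section \ref{sec:backhausz}. For the base case $k=0$, the recursion for $N_{m,0}$ is already of the form the lemma requires, with contraction coefficient $r>0$ and additive constant $1-r$ together with bounded increments; it therefore yields $N_{m,0}/m\to(1-r)/(1+r)=a_0$ almost surely, independently of $G_{m_0}$. For the inductive step I would assume $N_{m,j}/m\to a_j$ almost surely for every $j<k$. The only term in the $N_{m,k}$-recursion coupling to lower degrees is $\tfrac{r}{m-1}N_{m-1,k-1}=r\,\tfrac{N_{m-1,k-1}}{m-1}$, which by the induction hypothesis equals $ra_{k-1}+\eps_m$ with $\eps_m\to0$ almost surely. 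Hence the recursion again fits the lemma's template, with contraction coefficient $r$ and limiting additive term $ra_{k-1}+r\bone[k=1]$, and the lemma delivers $N_{m,k}/m\to(ra_{k-1}+r\bone[k=1])/(1+r)=a_k$ almost surely, closing the induction.

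The main obstacle will be the inductive application of the martingale lemma: one must verify that the perturbation $\eps_m$ obtained by replacing $N_{m-1,k-1}$ by its limit is of a type the lemma tolerates. Because this is a genuinely almost-sure (not merely in-probability) error, the induction has to be run entirely at the level of almost-sure convergence, and one must also confirm that the bounded-increment and positive-contraction hypotheses hold uniformly at every level $k$. Some care is finally needed at the endpoints $r=0$ and $r=1$: at $r=0$ no vertex ever connects so $N_{m,0}/m\to1=a_0$ directly, while $r=1$ is the random recursive tree; both can be covered either by the same lemma with the appropriate constants or by a short separate remark.
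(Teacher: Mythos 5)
Your setup is right and matches the paper's: the same one-step recursions, the same induction on $k$, and the same candidate limits. But there is a genuine gap at the point where you claim the martingale lemma ``delivers $N_{m,k}/m\to a_k$ almost surely.'' The lemma of Section \ref{sec:backhausz} (Lemma \ref{lemma:weakBM}) does not give almost sure convergence at all: it gives, for each fixed $m$, a two-sided bound holding with probability $1-C\beta_{m,\delta}$ and with an additive error $Cm^{-\alpha_\delta}$, and these failure probabilities are not summable, so you cannot upgrade to an a.s.\ statement by Borel--Cantelli. What the paper actually uses here is the \emph{original} Lemma 1 of \cite{Backhausz}, which requires the conditional second-moment bound \eqref{sqbd} (verified directly in this case: the increments satisfy $\E[(N_{m,k}-N_{m-1,k})^2\mid G_{m-1}]\le 4$) and which yields only the one-sided conclusion $\liminf_{m\to\infty}\xi_m/m\ge v/(u+1)$ almost surely, where $v$ is an a.s.\ lower bound on $\liminf_m v_m$. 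So even with the correct lemma, each inductive step gives you only $\liminf_{m\to\infty}N_{m,k}/m\ge a_k$, not the limit; and your plan to feed the full limit $a_{k-1}$ back into the next level of the induction is therefore not available as stated.

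The missing idea is how the liminf lower bounds get upgraded to limits. The paper does this via Lemma 3.1 of \cite{Thornblad2014}: since $\sum_{k\ge 0}N_{m,k}/m=1$ for every $m$ and the candidate limits satisfy $\sum_{k\ge 0}a_k=1$, the family of bounds $\liminf_m N_{m,k}/m\ge a_k$ for all $k$ forces $\lim_m N_{m,k}/m=a_k$ for every $k$ (any excess mass in one coordinate would violate the total-mass constraint). In your write-up the identity $\sum_k a_k=1$ appears only as a consistency check on the fixed-point system, whereas it is in fact the linchpin of the convergence argument. Note also that for the induction it suffices to use the liminf bound $\liminf_m N_{m-1,k-1}/(m-1)\ge a_{k-1}$ in the predictable term $v_{m,k}=r(\mathbf{1}[k=1]+N_{m-1,k-1}/(m-1))$, so the induction runs entirely on liminf inequalities and the upgrade to limits happens once, at the very end. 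You correctly flagged the handling of the perturbation $\eps_m$ as the main obstacle, but the resolution is not to control $\eps_m$ more carefully; it is to restructure the argument around one-sided bounds plus the total-mass identity.
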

 When $r=1$, then $(a_k)_{k\geq 1}$ in \eqref{ak} is the probability mass function of the geometric distribution on $\{1,2,\dots\}$ with success probability $1/2$.
 
Next, we turn to Model B, for which we show that there are $(p_k,q_k,r_k)_{k \ge  m_0}$ such that the number of isolated vertices remains bounded in expectation as the network size tends to infinity. In the first proposition below, the probabilities $p_k$, $q_k$ and $r_k$ are updated after every step, conditionally on the current state of the network. The corresponding proofs are standard, and are given in Section \ref{sec:furtherproofs}.

\begin{proposition}
\label{prop: updatingpq}
    For Model B, suppose that one of the following holds for 
    $(p_k,q_k,r_k)_{k \ge   m_0}$:
    \begin{enumerate}
        \item for all $k \ge   m_0$, we have $r_{k}\ge 1/2$, and conditionally on $G_k$,
    \begin{equation} \label{eq:qm}
        q_{k} \ge   \bbclr{1+\frac{2r_{k}-1}{2}\frac{N_{k,0}}{N_{k}}}^{-1} 
    \end{equation}
    so that $q_k\ge 2/3$ almost surely for all $k\ge m_0$; and if $r_{k}<1,$ further 
    \begin{align} \label{eq:pm}
      0\le   p_{k}
            \le \frac{2r_{k}-1}{1-r_{k}}\frac{N_{k,0}}{N_{k}}-\frac{2}{(1-r_{k})}\frac{1-q_{k}}{q_{k}}.
    \end{align}
   \item for all $k \ge   m_0$, $r_{k}\ge 1/2$, and conditionally on $G_k$, 
   \begin{align}\label{eq:qmrm}
        q_{k} \ge   \bbclr{1-\frac{1}{2}\frac{N_{k,0}}{N_{k}}}\bbclr{1-(1-r{_k})\frac{N_{k,0}}{N_{k}}}^{-1};
   \end{align}
   so that $q_k\ge 1/2$ almost surely for all $k\ge m_0$; and if $r_{k}<1$, further 
   \begin{align}\label{eq:pm1}
     0\le   p_{k}\le \bbclr{\bbclr{1-2q_{k}(1-r_{k})}\frac{N_{k,0}}{N_{k}} - 2(1-q_{k})}{\bclr{{q_{k}(1-r_{k})}}^{-1}}.
   \end{align}
    \end{enumerate}
    Then, in either cases, $N_{m,0}$ is a supermartingale in $L^1$ and converges almost surely as $m\to\infty$. Moreover as $m\to\infty$, we have $N_{m,0}/N_m\to 0$ in probability. 
\end{proposition}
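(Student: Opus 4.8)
The plan is to prove that the isolated-vertex count $(N_{m,0})_{m\ge m_0}$ is a non-negative, integrable supermartingale for the filtration $\cF_m=\sigma(G_{m_0},\dots,G_m)$, and then to deduce both conclusions from the supermartingale convergence theorem together with the divergence of the vertex count $N_m$. The crux is a one-step drift computation of the sort set up in Section~\ref{sec:recursion}: writing $\Delta_k:=N_{k+1,0}-N_{k,0}$ for the increment produced by the step from $G_k$ to $G_{k+1}$ (which uses the parameters $(p_k,q_k,r_k)$, all $\cF_k$-measurable), I want to show that under either \eqref{eq:qm}--\eqref{eq:pm} or \eqref{eq:qmrm}--\eqref{eq:pm1} one has $\EE[\Delta_k\mid G_k]\le 0$.

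First I would compute $\EE[\Delta_k\mid G_k]$ exactly by conditioning on the step type. Write $x:=N_{k,0}/N_k$ and $y:=N_{k,1}/N_k$ for the proportions of isolated and degree-one vertices in $G_k$. In a duplication-divergence step (probability $q_k$) only the new vertex can become newly isolated, while the one existing vertex whose isolation status can change is the selected vertex $v$; conditioning on $\deg(v)=d$ gives increment $1-2r_k$ when $d=0$ (the new vertex is isolated with probability $1-r_k$, and $v$ loses its isolation with probability $r_k$) and $(1-r_k)p_k^{\,d}$ when $d\ge 1$. Averaging over the uniform choice of $v$ gives $\EE[\Delta_k\mid G_k,\ \text{dup.-div.}]=x(1-2r_k)+(1-r_k)\sum_{d\ge1}(N_{k,d}/N_k)\,p_k^{\,d}$. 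In a deletion step (probability $1-q_k$), erasing the edges of the chosen vertex isolates it when it is non-isolated and additionally isolates each of its degree-one neighbours; the key bookkeeping point is that summing the number of degree-one neighbours over the choice of vertex counts each degree-one vertex exactly once through its unique neighbour, so $\EE[\Delta_k\mid G_k,\ \text{deletion}]=(1-x)+y$.

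Next I would bound and combine. Using $p_k^{\,d}\le p_k$ for $d\ge1$ with $\sum_{d\ge1}N_{k,d}/N_k=1-x$, and the trivial bound $y\le 1-x$, I get
\[
\EE[\Delta_k\mid G_k]\ \le\ q_k\,\bclr{x(1-2r_k)+(1-r_k)p_k(1-x)}+2(1-q_k)(1-x).
\]
Under \eqref{eq:pm} one has $(1-r_k)p_k\le(2r_k-1)x-2(1-q_k)/q_k$; substituting this and using $q_k\cdot(1-q_k)/q_k=1-q_k$ cancels the deletion term and leaves $\EE[\Delta_k\mid G_k]\le-q_k(2r_k-1)x^2\le0$, since $r_k\ge1/2$. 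The requirement that the upper bound on $p_k$ be non-negative is exactly \eqref{eq:qm} (which also forces $q_k\ge2/3$); when $r_k=1$ the $p_k$-term vanishes and \eqref{eq:qm} alone yields the drift bound. The verification under \eqref{eq:qmrm}--\eqref{eq:pm1} is structurally identical: substituting \eqref{eq:pm1} again cancels the deletion term and reduces the claim to $x\,\bclr{1-2q_k(1-r_k)}\ge1-q_k$, which follows from \eqref{eq:qmrm} — indeed \eqref{eq:qmrm} gives the stronger inequality with $2(1-q_k)$, which is precisely what makes the bound on $p_k$ in \eqref{eq:pm1} non-negative. Note that $r_k\ge1/2$ guarantees $1-2q_k(1-r_k)\ge1-q_k\ge0$, so all signs are under control. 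In both cases $\EE[\Delta_k\mid G_k]\le0$.

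Finally I would conclude. Since $0\le N_{m,0}\le N_m\le m$, the process is a non-negative integrable supermartingale and hence converges almost surely to a finite limit. Because $q_k\ge1/2$ for all $k$, the conditional (Lévy) Borel--Cantelli lemma shows that a new vertex is added infinitely often, so $N_m\to\infty$ almost surely; dividing the a.s.-convergent numerator by the diverging denominator yields $N_{m,0}/N_m\to0$ almost surely, in particular in probability. The main obstacle is the exact one-step drift computation, specifically correctly accounting for the degree-one neighbours created in a deletion step and for the degree-dependent factor $p_k^{\,d}$ in a duplication-divergence step; once the drift identity is in hand, the two parameter regimes are dispatched by essentially the same algebraic cancellation.
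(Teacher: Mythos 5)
Your proof is correct, and the supermartingale half is essentially the paper's argument: you derive the same one-step drift identity as the recursion \eqref{eq:new_nm0_diff3} (your deletion-step bookkeeping $(1-x)+y$ and duplication-step terms match it exactly), bound the divergence and deletion contributions, and verify that \eqref{eq:qm}--\eqref{eq:pm} (resp.\ \eqref{eq:qmrm}--\eqref{eq:pm1}) make the conditional increment non-positive via the same algebraic cancellation; your intermediate bounds ($\sum_{d\ge1}p_k^d N_{k,d}/N_k\le p_k(1-x)$ and $y\le 1-x$, rather than the paper's $\le p_k$ and $\le 1$) are marginally tighter but immaterial, and your explicit final expressions $-q_k(2r_k-1)x^2$ and $x(q_k-1-\dots)$ check out. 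Where you genuinely diverge is the last step. The paper proves $N_{m,0}/N_m\to 0$ in probability by splitting $\PP(N_{m,0}/N_m>\eps)\le\PP(N_{m,0}>\eps m/2)+\PP(N_m<m/2)$ and controlling the two terms with Markov's inequality (via $\E N_{m,0}\le \E N_{m_0,0}$) and a Chernoff bound for the binomial variable stochastically dominated by $N_m$. You instead combine almost sure convergence of the non-negative supermartingale $N_{m,0}$ to a finite limit with $N_m\to\infty$ almost surely (conditional Borel--Cantelli, using $q_k\ge 1/2$), obtaining $N_{m,0}/N_m\to 0$ almost surely and hence in probability. Your route is shorter and delivers a strictly stronger conclusion; the paper's quantitative route has the minor advantage of producing an explicit polynomial rate for the probability bound, which your argument does not.
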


In the assumptions of Proposition \ref{prop: updatingpq}, the requirement $r_k\ge 1/2$ is to ensure that $q_k$ can take value less than one. That  $q_k$ in \eqref{eq:qm} and \eqref{eq:qmrm} is larger than 2/3 and 1/2, respectively, can be seen by applying the inequalities $N_{k,0}/N_k\le 1$ and $r_k\le 1$. Under either sets of assumptions, if ${N_{k,0}}=0$ at some step $k \ge   m_0$, we have $q_{k}=1$, so each step is with probability one a duplication step. In this situation, no new isolated vertices can be created at the subsequent steps: this is clear if $r_k=1$, and if $r_k<1$, $p_k=0$ also, any edges created at the divergence phase are retained with probability $1-p_k=1$. In particular, if the initial network does not contain any isolated vertices, then $N_{m,0}=0$ almost surely for all $m \ge   m_0$. 

In Proposition \ref{prop:modelc2} 
the sequences $(p_k,q_k,r_k)_{k\ge  m_0}$ 
do not depend on the network growth process. The proposition
guarantees that if the expected number of deletion steps is bounded as $m\to\infty$ and the sequence $(p_m(1-r_m))_{m\ge m_0}$ has a finite sum, then the expected number of isolated vertices is also bounded in $m$. The proof is found in Section \ref{sec:furtherproofs}. 

\begin{proposition}\label{prop:modelc2}
    For Model B, suppose that the sequence $(p_k,q_k,r_k)_{k \ge   m_0}$ is such that 
     for all $k\ge m_0$, $r_k \ge   1/2$ or $q_k \le (2 (1-r_k))^{-1}$ (or both), and  \begin{equation}\label{eq:pqr}
        \sum_{k \ge   m_0}\{
        p_k(1-r_k)+ 2(1-q_k)\}<\infty.
    \end{equation}
    Then, {$N_{m,0}$ is bounded in expectation with respect to $m$, and $N_{m,0}/m\to 0$ in probability as $m\to\infty$.} 
    Moreover, $N_{m,0}/N_m\to 0$ in probability as $m\to\infty$.
\end{proposition}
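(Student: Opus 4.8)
The plan is to control the first moment $\EE(N_{m,0})$ through a one-step recursion and then convert the resulting bound into the two stated convergences in probability, via Markov's inequality and a concentration estimate for the random vertex count $N_m$. The heart of the argument is to show that, under the stated hypotheses, the expected increment $\EE[N_{m,0}-N_{m-1,0}\mid G_{m-1}]$ is bounded above by the summable quantity $p_{m-1}(1-r_{m-1})+2(1-q_{m-1})$ plus a term with a non-positive coefficient; summing over $m$ then keeps $\EE(N_{m,0})$ bounded uniformly.

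First I would derive the recursion (using the formulas of Section \ref{sec:recursion}) by conditioning on $G_{m-1}$ and splitting on the step type. Writing $x:=N_{m-1,0}/N_{m-1}$, in a duplication-divergence step (probability $q_{m-1}$) a uniformly chosen vertex $v$ that is already isolated contributes expected change $1-2r_{m-1}$ (the new vertex is isolated with probability $1-r_{m-1}$, while $v$ loses its isolation with probability $r_{m-1}$), whereas a non-isolated $v$ of degree $d_v\ge 1$ only creates a new isolated vertex, with probability $(1-r_{m-1})p_{m-1}^{d_v}\le (1-r_{m-1})p_{m-1}$; thus the duplication contribution is at most $x(1-2r_{m-1})+(1-r_{m-1})p_{m-1}$. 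In a deletion step (probability $1-q_{m-1}$) the chosen vertex $w$ becomes isolated when $d_w\ge 1$, and a neighbour of $w$ becomes isolated precisely when it has degree $1$; averaging over $w$ and using the identity $\sum_w \#\{u\sim w:\ d_u=1\}=N_{m-1,1}$ together with $N_{m-1,1}\le N_{m-1}-N_{m-1,0}$ gives a deletion contribution of at most $2(1-x)$. Collecting terms yields
\[
\EE[N_{m,0}-N_{m-1,0}\mid G_{m-1}]\le x\,\bclr{q_{m-1}(1-2r_{m-1})-2(1-q_{m-1})}+p_{m-1}(1-r_{m-1})+2(1-q_{m-1}).
\]

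It then remains to check that the coefficient $q(1-2r)-2(1-q)$ of $x$ is non-positive under the hypotheses. If $r_{m-1}\ge 1/2$ this is immediate, since both $q(1-2r)\le 0$ and $-2(1-q)\le 0$. If instead $q_{m-1}\le (2(1-r_{m-1}))^{-1}$, i.e.\ $2q(1-r)\le 1$, then $q(1-2r)-2(1-q)=3q-2qr-2\le q-1\le 0$. Since $x\ge 0$, the $x$-term may be dropped, leaving $\EE[N_{m,0}-N_{m-1,0}\mid G_{m-1}]\le p_{m-1}(1-r_{m-1})+2(1-q_{m-1})$. Taking expectations and telescoping from $m_0$ to $m$, the summability assumption \eqref{eq:pqr} gives $\EE(N_{m,0})\le \EE(N_{m_0,0})+\sum_{k\ge m_0}\{p_k(1-r_k)+2(1-q_k)\}=:C<\infty$, uniformly in $m$; this is the same near-supermartingale bookkeeping as in the proof of Theorem \ref{lem:meanbounds}. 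Note that, unlike in Proposition \ref{prop: updatingpq}, $N_{m,0}$ need not be a supermartingale here, but its expected increments are summable, which suffices.

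Finally, Markov's inequality gives $\Prob(N_{m,0}/m>\eps)\le C/(\eps m)\to 0$, proving $N_{m,0}/m\to 0$ in probability. For the ratio $N_{m,0}/N_m$ I would use that $N_m-m_0$ is Poisson-binomial with $\EE(N_m)=m_0+\sum_{k=m_0}^{m-1}q_k=m-O(1)$ and $\Var(N_m)=\sum_k q_k(1-q_k)\le\sum_k(1-q_k)=O(1)$, both consequences of $\sum_k(1-q_k)<\infty$; Chebyshev's inequality then yields $\Prob(N_m<m/2)\to 0$. Splitting on $\{N_m\ge m/2\}$ gives $\Prob(N_{m,0}/N_m>\eps)\le \Prob(N_{m,0}>\eps m/2)+\Prob(N_m<m/2)\le 2C/(\eps m)+\Prob(N_m<m/2)\to 0$, establishing $N_{m,0}/N_m\to 0$ in probability. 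The main obstacle is the recursion step: correctly accounting for the isolated neighbours created in a deletion step through the identity $\sum_w \#\{u\sim w:\ d_u=1\}=N_{m-1,1}$, and then verifying non-positivity of the drift coefficient across both branches of the disjunctive hypothesis; controlling the random denominator $N_m$ in the last claim is the only other point requiring care.
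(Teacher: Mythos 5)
Your proposal is correct and follows essentially the same route as the paper: bound the conditional expected increment of $N_{m,0}$ via the recursion, observe that the coefficient of $N_{m-1,0}/N_{m-1}$ is non-positive under either branch of the hypothesis (you use the slightly sharper bound $N_{m-1,1}\le N_{m-1}-N_{m-1,0}$, the paper uses $N_{m-1,1}\le N_{m-1}$; both drift coefficients are non-positive), telescope to get a uniform bound on $\E N_{m,0}$, and then combine Markov's inequality with the Poisson-binomial concentration of $N_m$ (you split on $\{N_m\ge m/2\}$ where the paper invokes Slutsky after showing $N_m/m\to 1$). These differences are cosmetic; no gap.
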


Let $\alpha_1,\alpha_2>1$. An example
 of $(p_k,q_k,r_k)$ that satisfies the condition of Proposition \ref{prop:modelc2} is $p_k=k^{-\alpha_1}$, $q_k=1-k^{-\alpha_2}$ and $r_k\ge 1-(2q_k)^{-1}$. Another example is $q_k=1-k^{-\alpha_1}$, $r_k=\max\{1/2, 1-k^{-\alpha_2}\}$, and $p_k$ in this case is allowed to be arbitrary in $[0,1]$.

\begin{figure}[!tpb]
\centering 
   \includegraphics[width=0.85\textwidth, height=0.6\textwidth]{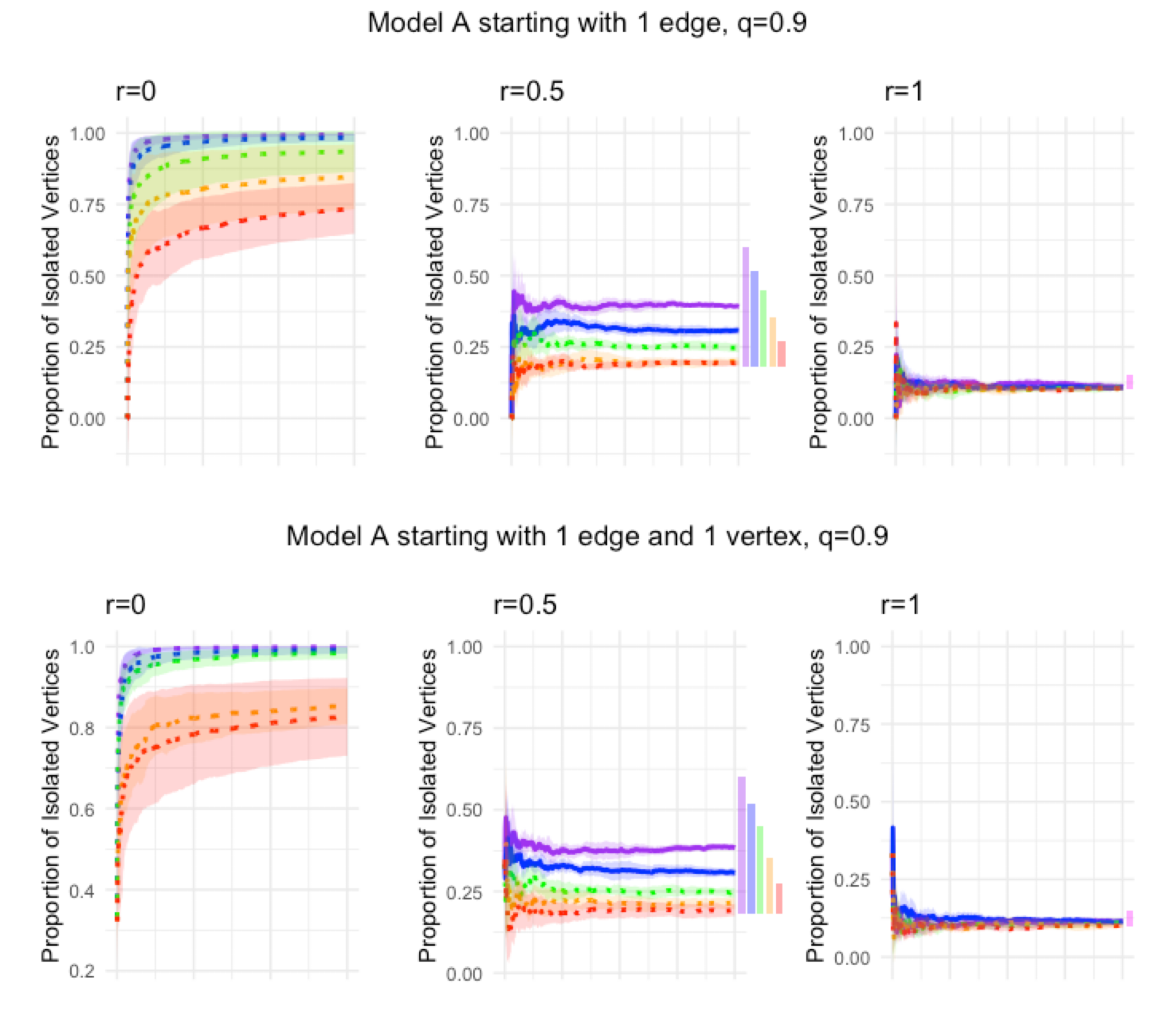} 
  \caption{\footnotesize Evolution of the average proportion of isolated vertices of Model A over time, initialised either with a single edge (top) or a combination of a single vertex and an edge (bottom). The simulations are run for 1500 steps when $r=0$, and 1000 steps when $r \neq 0$, as the rate of convergence for the proportion of isolated vertices appears to increase as $r$ increases. Different values of $p$ are represented by different colours: 
  \textbf{red}: $p=0$; \textbf{orange}: $p=0.2$; \textbf{green}: $p=2-\sqrt{22}/3\approx0.4365$; \textbf{blue}: $p=0.6$; \textbf{purple}: $p=0.8$. For $r = 0$, Theorem \ref{th:LBk}  requires $q \leq 1/2$. For $r \neq 0$ and $q = 0.9$, Theorem \ref{th:LBk} requires $p > 2 - \sqrt{22}/3$, subject to $q \leq \min \{1, 1/(2(1-r))\}$. Thus, the \textbf{solid} lines represent parameter sets satisfying the conditions of Theorem \ref{th:LBk}, while the \textbf{dashed} lines correspond to cases where these conditions are not met. Shaded regions correspond to $\pm 2$ standard errors; capped at $0$ and $1$. The coloured bars on the right indicate the intervals $[\rho_0,(\rho_1\wedge 1)]$, where $\rho_0$ in \eqref{de:rho} does not depend on $p$. When $r=1$, $\rho_1$ in \eqref{rho1} no longer depends on $p$, so the interval $[\rho_0,\rho_1]$ is represented by a single bar in magenta. 
} 
\label{fig:modelA}
  \vspace{-4mm}
\end{figure}

Before embarking on the proofs we present simulations for Model A with edge deletion. Each simulation is run for at least 1000 steps, repeated 30 times, and we plot the average proportion of isolated vertices. The initial graph is configured either as a single edge or as a single vertex combined with an edge. We primarily illustrate the case where $q = 0.9$, though similar patterns are observed for other values of $q \in [0, 1]$, with faster convergence as $q$ decreases (see \textbf{the Appendix}). The code for this simulations can be found at \url{https://github.com/rh-zhang/DD_Edge-Deletion}.

Figure \ref{fig:modelA} shows that in Model A, there are values of $p, q, r$ such that the proportion of isolated vertices appears to converge to a limit between 0 and 1, unaffected by the initial configuration. Notably, the limiting proportion of isolated vertices increases with $p$, and reaches 1 in some instances. The panels for the $r=1$ case illustrate the second result of Corollary \ref{cor}; noting that when $q=0.9,r=1$, $[\rho_0,\rho_1]=[0.1,0.15]$.  
Simulations also suggest that Theorem \ref{th:LBk} holds even in some cases where its conditions are not met: for instance, the proportion of isolated vertices seems to converges to 1 when $p=0.8, q=0.9, r=0$. This suggests that the theorem could potentially be extended to broader conditions.  

\section{Recursion Formulas}\label{sec:recursion}
The analysis of the two models in this paper relies heavily on recursion formulas for the expectations. First we state the recursion formulas for $N_{m,k}$ in both models. Then, we show that in Model~A, $\E[(N_{m,k}-N_{m-1,k})^2]$ can be bounded in terms of the second moment of the expected degree distribution, a finding which underpins our martingale argument in Section \ref{section: convergence0}, and we give an upper bound for the latter quantity.  
The proofs are straightforward, and are deferred to
Section \ref{sec:furtherproofs}.

\begin{proposition} \label{prop:recursion}
The recursion formulas for $N_{m,k}$, $k\ge 1$ in both models are follows.
\begin{enumerate}
\item
    For  Model A, we have
    \begin{multline}
\label{eq:new_nm0_diff}
    \mathbb{E}( N_{m, 0}-N_{m-1,0} \mid G_{m-1}) =  q(1-2r)\frac{N_{m-1,0}}{N_{m-1}}+q(1-r)\sum^{m-2}_{j =    1}\frac{N_{m-1,j}}{N_{m-1}}p^{j}\\ + (1-q) \left(2+\frac{N_{m-1,1}-N_{m-1,0}}{N_{m-1}} \right),
\end{multline}
whereas for $k \ge  1,$
\begin{align}
\label{eq:new_nmk_diff}
    &\mathbb{E}( N_{m, k} - N_{m-1,k} \mid G_{m-1}) = \frac{N_{m-1,k-1}}{N_{m-1}}q\big(r+r(1-p)^{k-1}+(1-p)(k-1)\big) \nonumber\\
    &\quad -\frac{N_{m-1,k}}{N_{m-1}}q(r+(1-p)k)+  q(1-r) \sum^{m-2}_{j = k}\frac{N_{m-1,j}}{N_{m-1}}{j \choose k}p^{j-k}(1-p)^k \nonumber \\ 
    &\quad + qr\sum^{m-2}_{j = k}\frac{N_{m-1,j}}{N_{m-1}} {{j} \choose {k-1}}p^{j-k+1}(1-p)^{k-1} + (1-q) (k+1)\frac{N_{m-1,k+1}-N_{m-1,k}}{N_{m-1}}.
\end{align}

\item 
For  Model B, \eqref{eq:new_nmk_diff} holds with $p,q,r$ replaced with $p_{m-1},q_{m-1},r_{m-1}$, and instead of \eqref{eq:new_nm0_diff} we have 
\begin{multline}
\label{eq:new_nm0_diff3}
    \mathbb{E}( N_{m, 0}-N_{m-1,0} \mid G_{m-1}) =  q_{m-1}(1-2r_{m-1})\frac{N_{m-1,0}}{N_{m-1}}+q_{m-1}(1-r_{m-1})\sum^{m-2}_{j =    1}\frac{N_{m-1,j}}{N_{m-1}}p_{m-1}^{j}   \\
     + (1-q_{m-1}) \left(1+\frac{N_{m-1,1}-N_{m-1,0}}{N_{m-1}} \right).
\end{multline}
\end{enumerate}
\end{proposition}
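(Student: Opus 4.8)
The plan is to compute the conditional expectation $\E(N_{m,k}-N_{m-1,k}\mid G_{m-1})$ by conditioning on which of the two steps is taken at time $m$ and, within each, decomposing the increment into the contributions of (i) the new or chosen vertex, (ii) its neighbours, and (iii) any freshly added isolated vertex. Since we condition on $G_{m-1}$, the count $N_{m-1}$, the degree counts $N_{m-1,j}$, and the whole neighbourhood structure are fixed; the only randomness is the uniform choice of a vertex (each with probability $1/N_{m-1}$) together with the independent edge erasures. I would present the computation in full for Model A and then indicate the single modification that yields Model B.

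For the duplication-and-divergence step (probability $q$), the first task is the law of the degree of the new vertex $m$. If the selected vertex $v$ has degree $j$, then the edge to $v$ survives with probability $r$ while each of the $j$ edges to neighbours of $v$ survives independently with probability $1-p$, so $\deg(m)$ is distributed as $\ber(r)+\mathrm{Bin}(j,1-p)$. Weighting $\Prob(\deg(m)=k\mid \deg(v)=j)$ by $N_{m-1,j}/N_{m-1}$ and summing over $j$ produces the two binomial sums in \eqref{eq:new_nmk_diff}, and the analogous computation with $k=0$ (where $\deg(m)=0$ requires the edge to $v$ to be cut and all $j$ neighbour edges to be cut, probability $(1-r)p^{j}$) yields the sum in \eqref{eq:new_nm0_diff}. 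Separately, $v$ itself moves from degree $k-1$ to $k$, or out of degree $k$, exactly when its edge to $m$ survives, contributing the summands $qr\,N_{m-1,k-1}/N_{m-1}$ and $-qr\,N_{m-1,k}/N_{m-1}$.

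The key combinatorial input is a double-counting identity: for a uniformly chosen vertex, the expected number of its neighbours of degree $\ell$ equals $\ell N_{m-1,\ell}/N_{m-1}$, since each degree-$\ell$ vertex is counted once for every incident edge. Applied in the duplication step, a neighbour of $v$ of degree $k-1$ (resp.\ $k$) acquires the edge to $m$ with probability $1-p$, which supplies the coefficients $(1-p)(k-1)$ and $(1-p)k$. The same identity drives the deletion step: erasing all edges of the chosen vertex sends it to degree $0$ and decreases each neighbour's degree by one, and together with the newly added isolated vertex this produces both the last term $(1-q)(k+1)(N_{m-1,k+1}-N_{m-1,k})/N_{m-1}$ of \eqref{eq:new_nmk_diff} and the bracketed expression $2+(N_{m-1,1}-N_{m-1,0})/N_{m-1}$ of \eqref{eq:new_nm0_diff}. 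Model B differs only in that the deletion step adds no vertex, so this constant $2$ becomes $1$, giving \eqref{eq:new_nm0_diff3}, while \eqref{eq:new_nmk_diff} is unchanged (the added vertex, being isolated, never affects $N_{m,k}$ for $k\ge 1$).

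The step I expect to be most delicate is reconciling the binomial new-vertex contributions with the existing-vertex transitions without double counting, so that the stated closed form emerges. Concretely, the boundary term $j=k-1$ of the sum $qr\sum_j\binom{j}{k-1}p^{j-k+1}(1-p)^{k-1}N_{m-1,j}/N_{m-1}$ equals $qr(1-p)^{k-1}N_{m-1,k-1}/N_{m-1}$; this term must be peeled off and absorbed into the coefficient of $N_{m-1,k-1}$ to generate the $r(1-p)^{k-1}$ summand, which is why the displayed sums start at $j=k$ rather than $j=k-1$. An analogous merging of the $j=0$ term with the $N_{m-1,0}$ coefficient turns $q(1-r)$ and $-qr$ into the factor $q(1-2r)$ in \eqref{eq:new_nm0_diff}. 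Checking these boundary bookkeeping steps, and confirming that in the duplication step no neighbour of $v$ can enter or leave degree $0$ (so the $k=0$ increment receives contributions only from $v$ and from the new vertex), is the crux; the rest is routine algebra.
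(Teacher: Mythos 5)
Your proposal is correct and follows essentially the same route as the paper's proof: conditioning on the step type, computing the new vertex's degree law as $\mathrm{Be}(r)+\mathrm{Bin}(j,1-p)$ given a parent of degree $j$, using the double-counting identity that a uniformly chosen vertex has on average $\ell N_{m-1,\ell}/N_{m-1}$ neighbours of degree $\ell$, and tracking the parent, its neighbours, and the added isolated vertex separately. Your handling of the boundary terms (absorbing the $j=k-1$ case into the coefficient of $N_{m-1,k-1}$, and merging the $j=0$ case into $q(1-2r)$) and of the Model B modification matches the paper's bookkeeping exactly.
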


\begin{proposition} \label{prop:isolatedbound} For Model A, 
we have
\begin{align}
    \label{eq:isolatedbd}
    \E[(N_{m,0}-N_{m-1,0})^2\mid G_{m-1}] & \le 5(1-q)\sum^{m-2}_{j= 0}j^2 \frac{N_{m-1,j}}{N_{m-1}}  + 5;
\\
\label{eq:sqdiffk}
    \E[(N_{m,k}-N_{m-1,k})^2\mid G_{m-1}
      ]
 &\le (3+2q) \sum_{j=0}^{m-2} j^2 \frac{N_{m-1, j}}{N_{m-1}} + 4,  \quad \quad \mbox{ for } k \ge 1. 
  \end{align}
\end{proposition}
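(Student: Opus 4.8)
The plan is to bound the conditional second moment $\E[(N_{m,k}-N_{m-1,k})^2\mid G_{m-1}]$ by conditioning on the event type at step $m$ (duplication-divergence with probability $q$, deletion-and-addition with probability $1-q$) and then controlling the one-step change $\Delta_k := N_{m,k}-N_{m-1,k}$ in each case. Because the step only alters the network locally --- one vertex $v$ is chosen, a new vertex $m$ is attached (or an existing vertex is stripped of its edges and an isolated vertex is added) --- the quantity $\Delta_k$ can be decomposed into a bounded number of contributions, each tied to the degree of the chosen vertex $v$ or of the newly created vertex. The key heuristic is that the magnitude of $\Delta_k$ is governed by how many vertices can change their degree status relative to class $k$, and this is at most the degree of the affected vertex plus $O(1)$. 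Squaring and taking conditional expectations then produces a bound in terms of $\sum_j j^2 N_{m-1,j}/N_{m-1}$, which is exactly the (empirical) second moment of the degree distribution appearing on the right-hand side.

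First I would analyse the duplication-divergence step. When vertex $v$ has degree $d$, the new vertex $m$ is attached to $v$ and all $d$ neighbours of $v$, after which each such edge is independently deleted. This can change the degree-$k$ count in two ways: the new vertex $m$ itself lands in some degree class (changing $N_{\cdot,k}$ by at most $1$), and, more importantly, \emph{no} existing vertex other than $v$ changes degree --- only $m$ receives new edges, so the degrees of $v$'s neighbours are unaffected. Hence in a pure duplication-divergence step $|\Delta_k|$ is actually $O(1)$: the only vertices whose membership in class $k$ can change are $v$ (whose degree is unchanged in this step, since $v$ keeps its edges) and the new vertex $m$. I would make this precise, showing $|\Delta_k|\le 2$ or so in this case, which contributes the constant terms ($5$ and $4$) in the bounds. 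The subtlety for $k\ge 1$ versus $k=0$ is that for $k=0$ the added isolated vertex in the \emph{other} step type matters, whereas the duplication step can only remove a vertex from class $0$ or leave it.

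Next I would analyse the deletion-and-addition step, which is where the $\sum_j j^2 N_{m-1,j}/N_{m-1}$ term originates. Here a uniformly chosen vertex $v$ of degree $d$ has \emph{all} its edges deleted: $v$ moves to degree class $0$, and each of its $d$ neighbours loses exactly one edge, so each neighbour's degree drops by $1$. Thus up to $d$ neighbours can cross into or out of class $k$ (for $k\ge 1$), plus $v$ itself and the freshly added isolated vertex. Consequently $|\Delta_k|\le d + O(1)$ when $v$ has degree $d$. Squaring gives $|\Delta_k|^2 \le C(d^2+1)$, and taking the conditional expectation over the uniform choice of $v$ replaces $d^2$ by the average $\sum_j j^2 N_{m-1,j}/N_{m-1}$. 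Multiplying by the step probability $1-q$ (for the $k=0$ bound) or combining the $q$ and $1-q$ contributions (for the $k\ge1$ bound) yields the stated coefficients $5(1-q)$ and $3+2q$; I would track the constants carefully to confirm these specific numbers, using $(a+b)^2\le 2a^2+2b^2$ type inequalities to collect terms.

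The main obstacle is the bookkeeping of the constants, particularly obtaining the precise coefficients $5(1-q)$, $3+2q$ and the additive $5$ and $4$, rather than merely establishing a bound of the correct form $C\sum_j j^2 N_{m-1,j}/N_{m-1}+C'$. This requires carefully distinguishing which vertices can actually change class in each step type and avoiding double-counting: for $k\ge1$ both step types can contribute a $\sum_j j^2$ term (the duplication step through the degree of $m$, which equals the number of retained edges and is at most $d+1$), which is why the coefficient $3+2q$ carries a genuine $q$-dependence, whereas for $k=0$ only the deletion step produces the quadratic term, giving the cleaner $5(1-q)$. Establishing the sharp leading constants demands treating the binomial-thinning of edge retentions in the duplication step and verifying, via $\E[\mathrm{Bin}(d,1-p)^2]\le d^2$-type estimates, that the retained-edge count contributes no more than $d^2$ in expectation. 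Once the per-step contributions are correctly attributed, the final inequalities follow by summing the two conditional expectations weighted by $q$ and $1-q$.
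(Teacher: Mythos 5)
Your treatment of the deletion-and-addition step and of the case $k=0$ matches the paper's in spirit: the chosen vertex of degree $d$ isolates itself and decrements the degree of each of its $d$ neighbours, so $|N_{m,k}-N_{m-1,k}|\le d+O(1)$, and averaging $d^2$ over the uniform choice produces $\sum_j j^2 N_{m-1,j}/N_{m-1}$. However, your analysis of the duplication-divergence step for $k\ge 1$ contains a genuine error. You assert that ``no existing vertex other than $v$ changes degree --- only $m$ receives new edges, so the degrees of $v$'s neighbours are unaffected,'' and conclude $|\Delta_k|=O(1)$ there. This misreads the model: every edge between the new vertex $m$ and a neighbour $w$ of $v$ that survives the thinning increases $\deg(w)$ by one. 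Hence up to $\deg(v)$ pre-existing vertices can enter or leave degree class $k$ in a single duplication step --- each retained edge to a degree-$k$ neighbour removes that neighbour from class $k$, and each retained edge to a degree-$(k-1)$ neighbour adds one. (Likewise $v$ itself is not degree-invariant: it gains the parent--child edge with probability $r$.) This is exactly the mechanism the paper tracks with the counts $a$ and $b$ of retained edges to degree-$k$ and degree-$(k-1)$ neighbours, yielding a change of the form $b-a+O(1)$ whose square is bounded by $5\,\deg(v)^2+4$; it is the source of the $5q\sum_l l^2 N_{m-1,l}/N_{m-1}$ contribution and hence of the $q$-dependence in the coefficient $3+2q$.

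Your final paragraph half-recognises that the duplication step must contribute a quadratic term, but attributes it to ``the degree of $m$,'' which is not the right mechanism: the new vertex landing in class $k$ changes the count by at most $1$ regardless of how large its degree is. As written, your second paragraph would yield the (false) conclusion that the duplication step contributes only a constant, so the derivation does not establish the stated bound for $k\ge 1$; repairing it requires the neighbour-degree-increment accounting described above. For $k=0$ your argument survives, because vertices of positive degree gaining further edges cannot affect the isolated count, which is precisely why the paper's bound there carries the cleaner coefficient $5(1-q)$ with no $q$-term.
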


In view of Proposition \ref{prop:isolatedbound}, an upper bound on the second moment of the expected degree distribution at time step $m-1$ immediately yields an upper bound on $\E[(N_{m,k}-N_{m-1,k})^2]$. 
We denote  $k^{th}$ moment of the expected degree distribution at time $m$ for Model A by
\begin{align}
    \label{de:psi}
    \psi_k(m) =  
     \sum_{j=0}^{m-1} j^k \frac{   \E N_{m, j}}{m}. 
\end{align}

\begin{lemma} \label{lemmafirstmoment}
 Let 
   $ \psi_j(m)
   $ be as above, $ 
  \tau = 6q-4pq+p^2q-3$ and $\kappa = 4 q - 2 p q - 2$ be as in \eqref{tau} and \eqref{kappa}.
   Then, for Model A, there is a constant $C>0$ such that
   \begin{align*}
    \psi_1(m)\le 
    \begin{cases}
        C &\text{if $ \kappa <0$}\\
        C\log m,&\text{if $ \kappa =0$}\\
        Cm^{\kappa},&\text{if $ \kappa >0$}. 
\end{cases}\numberthis\label{eq:psi1bd}
\end{align*} Similarly, for $\kappa<0$, 
\begin{align}\label{b1}
    \psi_2(m) \le 
    \begin{cases}
        C&\quad \text{if $\tau <0$,}\\
        C\log m&\quad \text{if $ \tau =0$,}\\
        Cm^{\tau }&\quad \text{if $\tau >0$.}
    \end{cases}
\end{align}
For $\kappa=0$, 
\begin{align}\label{b2}
    \psi_2(m) \le 
    \begin{cases}
        C\log m&\quad \text{if $\tau<0$,}\\
        C\log^2 m&\quad \text{if $ \tau=0$,}\\
        Cm^{\tau }&\quad \text{if $ \tau>0$,}
    \end{cases}
\end{align}
Finally, for $\kappa>0$,  
\begin{align}\label{b3}
    \psi_2(m) \le  \begin{cases}
        Cm^\kappa &\quad \text{if $\tau\le 0$,}\\
        Cm^{\max\{\tau,\kappa\}}&\quad \text{if $ \tau>0$ and $\tau\ne \kappa$,}\\
        Cm^\tau \log m&\quad \text{if $ \tau>0$ and $\tau= \kappa$.}
    \end{cases}
\end{align} 
\end{lemma}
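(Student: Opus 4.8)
The plan is to reduce the statement to two scalar first-order linear recursions, one for $\psi_1(m)$ and one for $\psi_2(m)$, obtained by taking moments of the recursion \eqref{eq:new_nmk_diff} of Proposition \ref{prop:recursion}, and then to read off the rates from the standard asymptotics $\prod_{j}(1+a/j)\asymp m^{a}$. Throughout I use that Model A has a deterministic vertex count, $N_m=m$, so $N_{m-1}=m-1$, and that every $\psi_k(m)$ is nonnegative.

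First I would treat $\psi_1$. Multiplying \eqref{eq:new_nmk_diff} by $k$, summing over $k\ge1$ and taking expectations, the binomial sums collapse through the first-moment identity $\sum_{k}k\binom{j}{k}p^{j-k}(1-p)^{k}=j(1-p)$ together with its $r$-shifted analogue (the edge to $v$), while the deletion-and-addition term telescopes to $-2\sum_k kN_{m-1,k}$. Crucially, the coefficient of the second moment cancels identically, so the recursion closes at the level of $\psi_1$. Collecting terms — equivalently, using $m\psi_1(m)=\sum_v\deg(v)=2\,\E|E(G_m)|$ and tracking the expected edge count — one finds that the coefficient of $S_1(m-1)/(m-1)$ is exactly $\kappa$, so after dividing by $m$,
\[
\psi_1(m)=\Big(1+\tfrac{\kappa-1}{m}\Big)\psi_1(m-1)+\tfrac{2qr}{m}\le\Big(1+\tfrac{\kappa}{m}\Big)\psi_1(m-1)+\tfrac{C}{m},
\]
the inequality using $\psi_1\ge0$. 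Solving this majorant via $\prod_{j}(1+\kappa/j)\asymp m^{\kappa}$ and estimating the accumulated forcing $\sum_i i^{-1-\kappa}$ yields the trichotomy \eqref{eq:psi1bd}.

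For $\psi_2$ I would repeat the computation with weight $k^2$. This needs the second binomial moment $\sum_k k^2\binom{j}{k}p^{j-k}(1-p)^k=j(1-p)p+j^2(1-p)^2$; the chosen vertex and each of its neighbours contribute terms quadratic in their degrees, and the deletion step contributes $\sum_{w\sim u}(1-2\deg(w))$ from the neighbours of the emptied vertex. The structural fact that keeps the recursion closed is the handshake identity $\sum_v\sum_{w\sim v}\deg(w)=\sum_w\deg(w)^2=\sum_k k^2N_{m-1,k}$, which ensures the neighbour contributions feed back only into $\psi_2$ and $\psi_1$, never into a third moment. Collecting terms, the coefficient of $S_2(m-1)/(m-1)$ is exactly $\tau$, and passing to $\psi_2$ and bounding as above gives
\[
\psi_2(m)\le\Big(1+\tfrac{\tau}{m}\Big)\psi_2(m-1)+\tfrac{C\,\psi_1(m-1)+C}{m}.
\]
Substituting the bound on $\psi_1(m-1)$ from \eqref{eq:psi1bd} makes the forcing an explicit power or logarithm of $m$.

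The final step, and the main obstacle, is the case analysis for $\psi_2$. Writing the solution of the majorant as $\psi_2(m)\asymp m^{\tau}\big(1+\sum_i i^{-1-\tau}\psi_1(i)\big)$ and inserting the order of $\psi_1$ from \eqref{eq:psi1bd} reduces everything to estimating sums of the form $\sum_i i^{-1-\tau}$ (when $\kappa<0$), $\sum_i i^{-1-\tau}\log i$ (when $\kappa=0$) and $\sum_i i^{\kappa-1-\tau}$ (when $\kappa>0$). Each such sum either converges, giving a pure power $m^{\tau}$ (or a constant when $\tau<0$), or diverges like a power, giving $m^{\max\{\tau,\kappa\}}$, or sits exactly at the boundary and diverges logarithmically; the boundary cases $\tau=0$ and $\tau=\kappa>0$ are precisely those carrying the extra $\log m$ in \eqref{b1}--\eqref{b3}, and the stacked $\log^2m$ of \eqref{b2} comes from combining a $\kappa=0$ logarithm with a $\tau=0$ logarithm. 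Matching each summation estimate to the correct line is where the bookkeeping lies; the one genuinely model-specific point is to verify the second-moment closure, namely that weighting \eqref{eq:new_nmk_diff} by $k^2$ leaks no $\psi_3$ term into the recursion.
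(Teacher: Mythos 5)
Your proposal is correct and follows essentially the same route as the paper: weight the degree-count recursion \eqref{eq:new_nmk_diff} by $k$ and $k^2$, observe that the would-be higher-moment terms cancel so the recursions close with drift coefficients exactly $\kappa$ and $\tau$, then iterate the resulting linear majorants via $\prod_j(1+a/j)\asymp m^a$ and feed the $\psi_1$ bound into the $\psi_2$ forcing term, with the boundary cases $\tau=0$, $\kappa=0$ and $\tau=\kappa>0$ producing the logarithms. The only cosmetic difference is your handshake-identity framing of why no third moment leaks in, which the paper handles implicitly through the already-derived form of the recursion.
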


\begin{remark}\label{tk}
      Note that $\kappa$ {in \eqref{kappa}} is monotone increasing in $q$. So if $\tau<1$, then $ q<4/(6-4p+p^2),$
    which in turn implies that
   $
       \kappa = (4-2p)q-2<{(16-8p)}/{(6-4p+p^2)}-2\le  2\sqrt{2}-2\approx 0.8284;
  $  
    here we used
    that $(16-8p)/(6-4p+p^2)$ is maximised at $p=2-\sqrt{2}\approx 0.5858$. Thus, any pair $(p,q)\in[0,1]$ satisfying $\tau<1$ also satisfies  $\kappa<1$; see Figure \ref{fig:pqr} for $(p,q)$ such that $\tau<1$. 
\end{remark} 

\begin{remark}  
Analogues of Proposition \ref{prop:isolatedbound} and Lemma \ref{lemmafirstmoment} can be obtained similarly for Model B, when $p_m=p$, $q_m=q$ and $r_m=r$ for all $m\ge m_0$. 
For conciseness we do not delve into these details.
\end{remark}

\section{A Martingale Tool}\label{sec:backhausz}
A key tool for proving Theorems \ref{th:LBk} and \ref{th:LBK1} is a modified version of 
Lemma 1 in  \cite{Backhausz}, as stated below. 

\begin{lemma}
\label{lemma:weakBM}
    Given $m_0 \ge  0$, {$\xi_{m_0}\geq 0$}, let $(\xi_m)_{m \ge    m_0}$ be a non-negative {integrable} process adapted to the filtration $(\mathcal{F}_m)_{m \ge    m_0}$, with $\cF_{m_0}$ being the trivial $\sigma$-algebra. Assume that for all $m\ge m_0+1$, 
 \begin{align}\label{eq:genconexp}
        \E[\xi_m\mid\mathcal{F}_{m-1}] \ge    \bbclr{1-{\frac{u}{m-1}} }\xi_{m-1} + v_m
    \end{align}
   for some $0\le u\le m_0$, and  $(v_m)_{m \ge   m_0}$ is a non-negative predictable process such that almost surely, for some constant $v\ge  0$,
 \begin{equation}\label{eq:v}
    v_m \ge  v, \quad m\geq m_0. 
    \end{equation}
    Suppose also there are constants $\widetilde C,\delta>0$ not depending on $m$ such that 
    \begin{equation}\label{eq:sdc}
        \E[(\xi_m-\xi_{m-1})^2]\le \widetilde Cm^{1-\delta},\quad m\ge m_0
        .
    \end{equation}
   Let
   \begin{align}
       \alpha_\delta &:=\mathbbm{1}[\delta<2]\cdot \delta/4 + \mathbbm{1}[\delta\ge 2]\cdot 1/2; \label{alpha}\\
          \beta_{m,\delta} &:= \mathbbm{1}[\delta<2] \cdot m^{-\delta/2} + \mathbbm{1}[\delta=2] \cdot m^{-1}\log m + \mathbbm{1}[\delta>2] \cdot m^{-1}\label{beta}.
   \end{align}
    Then there exists a constant $C=C(m_0,u)>0$ 
    such that with probability at least $1-C\beta_{m,\delta}$, 
\begin{equation}\label{eq:lb}
       \frac{\xi_m}{m} \ge   \frac{v}{u+1}-Cm^{-\alpha_{\delta}}.  
    \end{equation}
If \eqref{eq:genconexp} holds with strict equality, and instead (or in addition) of \eqref{eq:v}, there is a constant $V >0$ such that 
\begin{equation}\label{eq:V}
    v_m \le  V, \quad m\geq m_0,  
    \end{equation}
then there exists a constant $C=C(\wt C, m_0,u, \delta)>0$ such that with probability at least $1-C\beta_{m,\delta}$, 
\begin{equation}\label{eq:vmb}
       \frac{\xi_m}{m} \le   \frac{V}{u+1}+Cm^{-\alpha_{\delta}}.
    \end{equation}
\end{lemma}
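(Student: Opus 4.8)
The plan is to remove the multiplicative drift in \eqref{eq:genconexp} by dividing out a deterministic product, turning the hypothesis into a genuine (sub/super)martingale inequality, and then to control the resulting fluctuations through the second-moment bound \eqref{eq:sdc} and Chebyshev's inequality. Set $\gamma_{m_0}=1$ and $\gamma_m=\prod_{j=m_0+1}^m\bclr{1-u/(j-1)}$ for $m>m_0$; since $0\le u\le m_0$, every factor with $j\ge m_0+1$ satisfies $1-u/(j-1)\ge 1-u/m_0\ge 0$, so (after discarding at most one vanishing initial factor in the borderline case $u=m_0$, which only changes constants) we have $\gamma_m>0$ and $\gamma_m\sim C_u m^{-u}$ as $m\to\infty$. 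Put $Y_m=\xi_m/\gamma_m$. Dividing \eqref{eq:genconexp} by $\gamma_m$ and using $\gamma_m=(1-u/(m-1))\gamma_{m-1}$ gives $\E[Y_m\mid\cF_{m-1}]\ge Y_{m-1}+v_m/\gamma_m\ge Y_{m-1}+v/\gamma_m$ by \eqref{eq:v}, so $Y_m-v\sum_{j=m_0+1}^m\gamma_j^{-1}$ is a submartingale. The companion deterministic sequence $g_m:=\gamma_m\sum_{j=m_0+1}^m\gamma_j^{-1}$ solves $g_m=(1-u/(m-1))g_{m-1}+1$ with $g_{m_0}=0$; writing $g_m=m/(u+1)+h_m$ one checks $h_m=(1-u/(m-1))h_{m-1}$, whence $h_m=O(m^{-u})=O(1)$ and therefore $g_m/m=1/(u+1)+O(1/m)$.

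Next I would extract the martingale part. Let $L_m:=\sum_{j=m_0+1}^m\bclr{Y_j-\E[Y_j\mid\cF_{j-1}]}$, a mean-zero $(\cF_m)$-martingale with $L_{m_0}=0$; telescoping together with the submartingale inequality then yields $Y_m\ge Y_{m_0}+L_m+v\sum_{j=m_0+1}^m\gamma_j^{-1}$. By orthogonality of martingale increments, $\E[L_m^2]=\sum_{j=m_0+1}^m\gamma_j^{-2}\,\E\bcls{\var(\xi_j\mid\cF_{j-1})}$. Since the conditional variance is at most the conditional second moment about the $\cF_{j-1}$-measurable point $\xi_{j-1}$, hypothesis \eqref{eq:sdc} gives $\E[\var(\xi_j\mid\cF_{j-1})]\le\E[(\xi_j-\xi_{j-1})^2]\le\wt C j^{1-\delta}$, and hence, using $\gamma_j^{-2}\asymp j^{2u}$, we obtain $\E[L_m^2]\lesssim\sum_{j=m_0+1}^m j^{2u+1-\delta}$.

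Then I would multiply back by $\gamma_m/m$ and apply Chebyshev to $\gamma_m L_m/m$, whose variance is $\gamma_m^2\E[L_m^2]/m^2\asymp m^{-2u-2}\sum_j j^{2u+1-\delta}$. Splitting on the sign of $2u+1-\delta$ gives, uniformly in $u\ge 0$, a bound of order $m^{-\delta}$ when $\delta<2$, of order $m^{-2}\log m$ when $\delta=2$, and of order $m^{-2}$ when $\delta>2$. Chebyshev with the threshold $m^{-\alpha_\delta}$ from \eqref{alpha} then makes $\Prob(|\gamma_m L_m/m|>m^{-\alpha_\delta})$ at most a constant multiple of $\beta_{m,\delta}$ from \eqref{beta}. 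On the complementary event, combining $\xi_m=\gamma_m Y_m\ge\gamma_m L_m+v\,g_m$ (dropping the non-negative term $\gamma_m Y_{m_0}=\gamma_m\xi_{m_0}$) with $g_m/m=1/(u+1)+O(1/m)$ and $\alpha_\delta\le 1/2<1$ gives $\xi_m/m\ge v/(u+1)-Cm^{-\alpha_\delta}$, which is \eqref{eq:lb}. For \eqref{eq:vmb} I would repeat the computation under equality in \eqref{eq:genconexp}: then $Y_m-\sum_j v_j\gamma_j^{-1}$ is an exact martingale, $\sum_j v_j\gamma_j^{-1}\le V\sum_j\gamma_j^{-1}$ by \eqref{eq:V}, and retaining the vanishing term $\gamma_m\xi_{m_0}/m=O(m^{-u-1})$ yields the matching upper bound.

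The main obstacle I anticipate is the uniform bookkeeping across the three $\delta$-regimes: establishing the precise asymptotics $\gamma_m\asymp m^{-u}$ and $g_m=m/(u+1)+O(1)$, and then verifying that the variance estimate for $\gamma_m L_m/m$ together with the Chebyshev threshold reproduces \emph{exactly} the pair $(\alpha_\delta,\beta_{m,\delta})$ of \eqref{alpha}--\eqref{beta}, including the logarithmic boundary case $\delta=2$ and the capping of $\alpha_\delta$ at $1/2$ for $\delta\ge 2$ (where the genuine fluctuation scale is smaller, so the stated bound is valid though not tight). Checking that every deterministic correction---in $g_m$, in $\gamma_m\xi_{m_0}/m$, and in $\gamma_m\sim C_u m^{-u}$---is of smaller order than $m^{-\alpha_\delta}$ is routine but must be handled with some care for small $u$.
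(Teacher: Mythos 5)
Your argument is correct and is essentially the paper's own proof: you normalise by the same deterministic product (your $\gamma_m$ is the reciprocal of the paper's $c_m$), extract the martingale part of the resulting submartingale, bound its second moment via orthogonality of increments together with \eqref{eq:sdc}, and apply Chebyshev at the threshold $m^{1-\alpha_\delta}c_m$, which reproduces exactly the pair $(\alpha_\delta,\beta_{m,\delta})$ in all three $\delta$-regimes. The only cosmetic differences are that you derive $\sum_j c_j = m^{u+1}/(u+1)+O(m^u)$ from the recursion for $g_m$ rather than from Gamma-function identities, and that you explicitly flag the degenerate factor in the borderline case $u=m_0$, which the paper glosses over.
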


\begin{remark}
   For Model A, the inequality \eqref{eq:sdc}, 
   with $\xi_{m}=N_{m,k}$,   is much easier to verify than the analogous condition set out in Lemma 1 of \cite{Backhausz}, which is
\begin{equation}\label{sqbd}
    \mathbb{E}[( N_{m, k} - N_{m-1, k})^2\mid  \mathcal{F}_{m-1}]\le O\big(m^{1-\delta}\big)\quad \text{for some $\delta>0$.}
\end{equation}
The trade-off is the weaker conclusion than that obtained in  \cite{Backhausz}, where \eqref{eq:lb} is replaced with  $\liminf_{m\to\infty} \xi_m/m\ge v/(u+1)$ almost surely (and similarly for the upper bound). In the proof of Theorem \ref{thm: pq1case}, we show that \eqref{sqbd} can be easily verified at least when $p=q=1$.  We also note that $0 \le N_{m,k} \le m$ and hence $N_{m,k}$ is integrable. 
\end{remark}

\begin{proof}[Proof of Lemma \ref{lemma:weakBM}]
     Define $c_{m_0}=1$ and
    \begin{equation}\label{cm}
        c_m:=\prod^m_{i=m_0+1}\bbclc{1-\frac{u}{i-1 }}^{-1}= \frac{\Gamma(m)\Gamma(m_0-u)}{\Gamma(m-u)\Gamma(m_0)},\quad m\geq {m}_0,
    \end{equation}
    which is positive for all $m \ge   {m}_0$, since $u< i$ for all $i \ge   {m}_0$.
    By \eqref{eq:genconexp} and \eqref{cm}, for $m\geq m_0$, 
    \begin{align}\label{eq:cmxim}
         \mathbb{E} [ c_m \xi_m \mid \mathcal{F}_{m-1} ] \ge   c_m \left(1 - {\frac{u}{m-1}} \right) \xi_{m-1} + c_m v_m = c_{m-1} \xi_{m-1} + c_m v_m.
    \end{align}
    Since $v_m$ are non-negative,  $(c_m \xi_m{-c_{m_0}\xi_{m_0}})_{m \ge   m_0}$ is therefore a submartingale with respect to $\cF_{m-1}$. 
    Moreover, since $\xi_m$ is square integrable by \eqref{eq:sdc}, so is $c_m \xi_m$. Now, write the Doob decomposition of the submartingale $(c_m\xi_m{-c_{m_0}\xi_{m_0}})_{m \ge   m_0}$, so that 
\begin{equation}\label{eq:doobineq}
        c_m\xi_m{-c_{m_0}\xi_{m_0}} = M_m + A_m,
    \end{equation}
    where
        $ M_m=\sum^m_{i=m_0+1}(c_i\xi_i-\E[c_i\xi_i\mid\mathcal{F}_{i-1}])$,  with $ {M_{m_0} = 0}, $
    is a mean zero martingale and  
    $
        A_m = \sum_{i=m_0+1}^m (  \E[c_i\xi_i\mid\mathcal{F}_{i-1}]-c_{i-1} \xi_{i-1}), $ with $ {A_{m_0} = 0},  
  $
    is predictable with respect to $\cF_m$. By \eqref{eq:cmxim}, 
    \begin{equation}\label{AA}
        A_m-A_{m-1}= \E[c_m\xi_m\mid\mathcal{F}_{m-1}]-c_{m-1} \xi_{m-1}  \ge   c_m v_m \ge   0, \quad m\geq m_0+1,
    \end{equation}
    implying that $(A_m)_{m \ge    m_0}$ is a non-negative and (weakly) increasing. Moreover, a computation similar to  \cite[Lemma 13]{frieze2020degree} (taking $a=0$ and $b=-u$ there) yields
\begin{align}\label{sc}
    \sum^m_{i=m_0+1 }c_i = \sum^{m}_{i=m_0+1} \frac{\Gamma(m_0-u)\Gamma(i)}{\Gamma(m_0)\Gamma(i-u)} =\frac{\Gamma(m_0-u)}{\Gamma(m_0)} \frac{m^{u+1}}{u+1} +O(m^u).
\end{align}
Thus by \eqref{AA}, \eqref{eq:v} and \eqref{sc}, 
\begin{align}\label{eq:Alb}
     A_m \geq \sum^m_{i=m_0+1}c_iv_i \geq   v \sum^m_{i=m_0+1}c_i \geq  \frac{\Gamma(m_0-u)}{\Gamma(m_0)} \frac{vm^{u+1}}{u+1} +O(m^u).
\end{align}
We now show that when $m$ is large, $|M_m|$ is typically much smaller than $A_m$. Let $B_m$ be the increasing process associated with the Doob decomposition of the submartingale $M_m^2$. Since $M_m$ is a mean zero martingale, a straightforward computation gives
\begin{equation}\label{BM}
    B_m=\sum_{i=m_0+1}^m \big( \EE \big[M_i^2 | \cF_{i-1}\big] - M_{i-1}^2\big) =\sum_{i=m_0+1}^m  \EE \big[(M_i- M_{i-1})^2 | \cF_{i-1}\big]=\sum^m_{i=m_0+1}\var(c_i\xi_i\mid\mathcal{F}_{i-1}).
\end{equation}
Thus,
\begin{align*}
B_m =  \sum^m_{i=m_0+1}\var(c_i(\xi_i-\xi_{i-1})\mid\mathcal{F}_{i-1})
& \le D_m:= \sum^m_{i=m_0+1}c^2_i\E[(\xi_i-\xi_{i-1})^2\mid \mathcal{F}_{i-1}].
    \end{align*}
Taking expectation and using the fact that $c_m$ is increasing,
    \begin{align}\label{D}
         \E D_m &= \sum^m_{i=m_0+1} \E[c_i^2(\xi_i-\xi_{i-1})^2]  
        \le \sum^m_{i=m_0+1} c_m^2 \E[(\xi_i-\xi_{i-1})^2].
    \end{align}
Below, $C>0$ is a constant that may vary, and depends on $\wt C$, $m_0$, $u$ and $\delta$. By applying \eqref{eq:sdc} to \eqref{D}, 
\begin{align}\label{eq:dm}
    \E D_m \le c_m^{2}\sum^m_{i=m_0+1}        \widetilde C i^{1-\delta}\le 
  \begin{cases}
      C m^{2-\delta}c_m^2&\text{if $\delta< 2$;}\\
      \widetilde C(\log m)c_m^2& \text{if $\delta= 2$;}\\
      C 
     m_0^{2-\delta} c_m^2 & \text{if $\delta> 2$.}
  \end{cases} 
\end{align}
Let $d_m:=m^{1-\alpha_\delta}c_m$, 
and note $d_m=m^{1+u-\al_\delta}(1+O(m^{-1}))$. By \eqref{BM}, $\E B_m = \E M_m^2$.
So by Markov's inequality, \eqref{alpha}, \eqref{eq:dm}, and separately  considering  the cases, 
\begin{align}\label{m2}
    \IP(|M_m| \ge d_m) =  \IP(M^2_m\ge d_m^2)\le \frac{\E M_m^2}{d_m^2} = \frac{\E B_m}{d_m^2} \leq \frac{\E D_m}{d_m^2} \le \begin{cases}
        Cm^{-\delta/2}&\text{if $\delta<2$;}\\
       \widetilde  Cm^{-1}\log m&\text{if $\delta= 2$;}\\
        Cm^{-1}&\text{if $\delta> 2$.}
    \end{cases}
\end{align}
Since $c_{m_0}\xi_{m_0}\ge 0$, by \eqref{eq:doobineq} we also have
\begin{align}\label{eq:fd}
     c_m\xi_m \geq {A_m} - |M_m|.
\end{align} 
In view of \eqref{eq:Alb} and \eqref{m2}, we can deduce that there is a constant $C>0$ such that with probability at least $1-C m^{-\beta_{m,\delta}}$,
\begin{align}\label{Amm}
     A_m - |M_m| \geq 
 \frac{\Gamma(m_0-u)}{\Gamma(m_0)} \frac{v m^{u+1}}{u+1} -d_m + O\big(m^{u}\big). 
\end{align}
Furthermore, by Stirling's formula,
$
        c_m = \frac{\Gamma(m_0-u)}{\Gamma(m_0)} m^u \big(1+O\big(m^{-1}\big)\big); 
$
combining this with \eqref{eq:fd} and \eqref{Amm}, and noting that $0<\alpha_\delta \le 1/2$,
\begin{align*}
   \frac{\xi_m}{m} \ge \frac{A_m-|M_m|}{m c_m} \geq \frac{v}{u+1} - C m^{-\alpha_{\delta}},
\end{align*}
which is \eqref{eq:lb}.

Similarly, assume that instead of (or in addition to) \eqref{eq:v}, we have \eqref{eq:V}, and \eqref{eq:genconexp} holds with equality.
 Then, $(c_m\xi_m-c_{m_0}\xi_{m_0})_{m\ge m_0}$ is still a submartingale.
Thus, using $ c_m \xi_m \leq A_m + |M_m|+  c_{m_0} \xi_{m_0}, $ 
we have from \eqref{m2} that 
there is a constant $C>0$ such that with probability at least $1-C m^{-\beta_{m,\delta}}$,
\begin{align*}
c_m \xi_m \leq A_m + d_m + c_{m_0} \xi_{m_0} .
\end{align*}
Furthermore, it follows from \eqref{eq:doobineq} that $A_m-A_{m-1}=c_mv_m$, and $\eqref{eq:V}$ further implies that $A_m = \sum_{i=m_0}^m c_i v_i \le V  \sum_{i=m_0}^m c_i.$ 
 Hence the assertion \eqref{eq:vmb} follows from a computation similar to \eqref{eq:Alb}.
\end{proof}

\section{Proofs of Theorems \ref{lem:meanbounds}, \ref{th:LBk} and \ref{th:LBK1}}
\label{section: convergence0}
With the previous preparation, we now set out to prove the main results. We take
 \begin{align}\label{eq:vm}
     u= 1+2q(r-1) \quad \text{and} \quad 
        v_m= (1-q)\bbclr{2+\frac{N_{m-1,1}}{m-1}} + q(1-r)\sum^{m-2}_{j=1}\frac{N_{m-1,j}}{m-1}p^j, 
    \end{align} 
and note that almost surely, for all $m\ge m_0$,
\begin{align}\label{vV}
        v:=2(1-q) \le v_m \le V:=3 (1-q) + pq(1-r).
    \end{align}
\begin{proof}[Proof of Theorem \ref{lem:meanbounds}]
Taking expectations in \eqref{eq:new_nm0_diff} yields 
$$\EE N_{m,0} =  \EE N_{m-1,0} 
        \left( 1 - \frac{u}{m-1} \right) 
        + v_m \le \EE N_{m-1,0} 
        \left( 1 - \frac{u}{m-1} \right) 
        + V.$$
Dividing by $m$ and taking the upper bound gives a recursion of the form 
$g(m+1) \le g(m) \left( 1 - \frac{u}{m}\right) + \frac{V}{m},$
where $g(m)=\E(N_{m,0})/m$. This is equivalent to 
$ g(m+1) - \frac{V }{u} \le \left( 1 - \frac{u}{m}\right) \left(g(m) - \frac{V }{u} \right) , $
which has as solution 
\begin{equation*}
    g(m+1) - \frac{V }{u} \le  \left(  g(m_0) - \frac{V }{u}\right)\prod^m_{i=m_0}  \left( 1 - \frac{u}{m}\right) = O(m^{-u}),
\end{equation*}
where $u>0$ by assumption.
The stated upper bound on $\E(N_m)/m$ thus follows immediately. The lower bound can be proved similarly using $v$  in \eqref{vV} and we therefore omit the details.
\end{proof}

The next proof again relies on the recurrence formulas.

\begin{proof}[Proof of Theorem \ref{th:LBk}]
    We prove the lower bound claimed in \eqref{eq:lb} by applying Lemma \ref{lemma:weakBM} to the recursion formula \eqref{eq:new_nm0_diff} for $N_{m,0}$, taking $u$ and $v_m$ as in \eqref{eq:vm}, and $\xi_{m,0} = N_{m,0}$.
    Then noting that $N_{m} = m$ for Model A, \eqref{eq:new_nm0_diff} reads 
     \begin{align*}
    \mathbb{E}( \xi_{m, 0} \mid G_{m-1}) 
        =   \xi_{m-1,0} 
        \left( 1 - \frac{u}{m-1} \right) 
        + v_m.
\end{align*}

First we verify the conditions for applying Lemma~\ref{lemma:weakBM}. 
Observe that $u \geq 0$ when $r = 1$ or $r < 1$ with $q \leq 1/(2(1-r))$, and at least one of these conditions holds under the theorem. Additionally, we have that  $u \le 1 <  m_0$ for all $m \geq m_0\ge 2$; where $m_0 \ge 2$ by assumption. We choose $p,q$ such that $\tau<1$,  so that $\kappa=4q-2pq-2<1$; see Remark \ref{tk}. By \eqref{de:psi} and combining the bounds in Lemma~\ref{lemmafirstmoment} for different values of $\tau,\kappa$, there is a constant $C>0$ such that 
\begin{align*}
 \psi_2(m) &\leq C \log ^2 m \cdot \mathbbm{1}[\tau\le 0,\kappa\le 0]  + Cm^{\max\{\tau,\kappa\}} \log m \cdot\big(1-\mathbbm{1}[\tau\le 0,\kappa\le 0]\big). 
\end{align*} 
Thus, by the last display, Proposition~\ref{prop:isolatedbound}, and choosing $0<\delta<\min\{1,1-\max\{\tau,\kappa\}\}$, for $m$ large enough, $ \E[(\xi_{m,0}-\xi_{m-1,0})^2]\le Cm^{1-\delta}$.
Furthermore, almost surely, \eqref{vV} holds for all $m\ge m_0$.
  Thus,  the conditions set out in Lemma \ref{lemma:weakBM} are met, including \eqref{eq:genconexp} with strict equality, and with $\alpha_\delta=\delta/4$ and $\beta_{m,\delta}=m^{-\delta/2}$ in \eqref{alpha} and \eqref{beta}. It follows 
   that there exists a constant $C>0$ such that with probability at least $1-Cm^{-\delta/2}$,
\begin{align*}   
    \frac{v}{u+1}-C m^{-\delta/4} \le     \frac{N_{m,0}}{m}  \le   \frac{V}{u+1}+C m^{-\delta/4},  
\end{align*}  
with $v/(u+1)=\rho_0$ and $V/(u+1)=\rho_1$, as in \eqref{de:rho} and \eqref{rho1}.
\end{proof}

\begin{proof}[Proof of Theorem \ref{th:LBK1}]
The proof is similar to the one of Theorem \ref{th:LBk}.  
When $p=1$, then we have that $\sum_{j\geq 1} p^j N_{m,j}/m=1-N_{m,0}/m$ and so the recursion formula \eqref{eq:new_nm0_diff} simplifies to
\begin{align*}
    \E(N_{m,0}\mid G_{m-1})
    &= N_{m-1,0}\bbclr{1+\frac{q(1-2r)-q(1-r)-(1-q)}{m-1}} + q(1-r) + (1-q)\bbclr{2+\frac{N_{m-1,1}}{m-1}}.
\end{align*} 
We take $\xi_{m} = N_{m,0}$ as before, and
\begin{eqnarray*}
        u' &=& q(1-r)+(1-q)-q(1-2r)=1-q+qr,\\
        v'_{m}&=& q(1-r) + (1-q)\bbclr{2+\frac{N_{m-1,1}}{m-1}}.
    \end{eqnarray*}
Clearly, $u'\ge 0$ for all $q,r\in[0,1]$, and $u'\le 1<m_0$. We have, almost surely for all $m\ge m_0$,
\begin{align*}
   v':=q(1-r)+2(1-q) \le v'_{m}\le V':= q(1-r) + 3(1-q) . 
\end{align*}
Furthermore if $p=1$, then $\tau=3q-3\le 0$ and $\kappa=2q-2\le 0$. By Lemma \ref{lemmafirstmoment}, $\psi_2(m)\leq C\log^2 m$ and so by Proposition \ref{prop:isolatedbound}, $\E[(\xi_{m,0}-\xi_{m-1,0})^2]\le Cm^{1-\delta}$, with $0<\delta<1$. Hence, the conditions for applying Lemma \ref{lemma:weakBM} hold and we conclude that there is a constant $C>0$ such that with probability at least $1-Cm^{-\delta/2}$, 
\begin{align}
 \frac{v'}{1+u'} -C m^{-\delta/4} \le   \frac{N_{m,0}}{m} \le \frac{V'}{1+u'} -C m^{-\delta/4},
\end{align}
with $v'/(u'+1)=\theta_0$ and $V'/(u'+1)=\theta_1$, as in \eqref{theta} and \eqref{theta1}. 
\end{proof}

\section{Further Proofs} \label{sec:furtherproofs}

\begin{proof}[Proof of Proposition \ref{prop:modelB}]
Assume that $p=0$, $q=1$ and $0\le r \le 1$.
 Let $\alpha_{m_0}  =1$ and
    \begin{equation}\label{de:al}
        \alpha_m = \prod^{m-1}_{k=m_0}\bbclr{1+\frac{1-2r}{k}}^{-1}=\frac{\Gamma(m)\Gamma(m_0+1-2r)}{\Gamma(m+1-2r)\Gamma(m_0)},\qquad m \ge   m_0+1. 
    \end{equation}
We first show that $\alpha_m N_{m,0}$ is a martingale in $L^1$.
     As $p =0$ and $q=1$, 
     the recursion formula \eqref{eq:new_nm0_diff} for Model A reduces to 
     \begin{align*}
        \E(N_{m,0}\mid G_{m-1}) &= N_{m-1,0} \bbclr{1+\frac{1-2r}{m-1}}, 
        \label{eq:Nm0p0q1modelB}
    \end{align*}
    which, with $\al_m$ as in \eqref{de:al}, implies that
    \begin{align*}
        \alpha_m\E(N_{m,0}\mid G_{m-1}) =  N_{m-1,0} \prod^{m-2}_{k=m_0}\bbclr{1+\frac{1-2r}{k}}^{-1} = \alpha_{m-1} N_{m-1,0}.
    \end{align*}
    Note that $\alpha_m$ is well-defined, {also} in the case of $m_0=1$, 
    and $\alpha_m>0$ for all $m \ge    m_0$. Thus, we can conclude that for the filtration $\cF_m = \sigma(G_m), m \ge  m_0,$ the sigma-fields generated by the graphs, $\alpha_{m} N_{m,0}$ is a non-negative martingale with 
    \begin{equation}\label{mmean}
         \E(\alpha_mN_{m,0}) = \alpha_{m_0} N_{m_0,0} = N_{m_0,0}.
    \end{equation}
    It follows from the martingale convergence theorem that $\alpha_mN_{m,0}$ has an almost sure limit as $m\to\infty$. Since $\alpha_m= m^{-(1-2r)}(1+O(m^{-1}))$ by Stirling's formula, we also conclude that $m^{-(1-2r)}N_{m,0}$ converges almost surely as $m\to\infty$. For the second assertion, we have by \eqref{mmean} 
    \begin{align}\label{eq:MGmean}
         \E(N_{m,0}) \le  C m^{1-2r}
    \end{align}
    for some constant $C>0$. Thus if $r\ge 1/2$, $N_{m,0}$ is bounded in  expectation as $m\to\infty$; and if $r>0$, it follows from Markov's inequality and \eqref{eq:MGmean} that
     $N_{m,0}/m\to 0$ in probability as $m\to\infty$. 
\end{proof}

\begin{proof}[Proof of Theorem \ref{thm: pq1case}]

Here we assume $p=q=1$.  Each new vertex is almost surely isolated if $r=0$, and so $\lim_{n\to\infty}N_{m,0}/m=a_0=1$ almost surely. Thus, we assume now $r>0$ and show that for all $k\ge0$,
\begin{align}\label{eq:liminfpq1}
    \liminf_{m\to\infty} {N_{m,k}}/{m}\geq a_k\quad \text{almost surely.}
    \end{align}
    Once we show this and $\sum_{k\ge 0}a_k=1$, \eqref{eq:pq1lim} follows immediately from \cite[Lemma 3.1]{Thornblad2014}, which indicates that if \eqref{eq:liminfpq1} holds and $\sum_{k\ge 0} a_k=1$, then $ \lim_{m\to\infty} N_{m,k}/m= a_k$ almost surely. 
    
    To prove \eqref{eq:liminfpq1}, we adapt the induction argument of \cite{Thornblad2014},
   which in turn hinges on  \cite[Lemma 1]{Backhausz}. 
   We start by claiming that when $p=q=1$,
    \begin{align}\label{eq:pq11}
        \E[(N_{m,k}-N_{m-1,k})^2\mid G_{m-1}]\leq \begin{cases}
            4,&k=1;\\
            1,&k\ne 1,
        \end{cases}
    \end{align}
    so that condition \eqref{sqbd} for applying \cite[Lemma 1]{Backhausz} is satisfied.
    To verify \eqref{eq:pq11}, note that the vertex degrees are non-decreasing. The count of isolated vertices changes by at most one per step, either by adding a new isolated vertex {or by an existing one gaining a neighbour.} The count of vertices of degree~$k$, $k \geq 2$, changes by at most one if a vertex of degree $k-1$ or $k$ is chosen for attachment. The count of leaves increases by one if the new vertex connects to a non-isolated vertex, and by two if it connects to an isolated vertex. This concludes the proof of \eqref{eq:pq11}.
    
    We next use an induction on $k$. For $k=0$, the recursion formula in \eqref{eq:new_nm0_diff} or a direct calculation gives 
    \begin{align*}
        \E[N_{m,0}\mid G_{m-1}]= N_{m-1,0}\bbclr{1-\frac{r}{m-1}} + (1-r).
    \end{align*}
    Define $\xi_{m,0}=N_{m,0}$, $ u_{m,0}=(mr)/(m-1)$ and $v_{m,0} = 1-r$.
    Clearly, $u:= \lim_{m\to\infty} u_{m,0} = r$ and $ v_0:=\lim_{m\to\infty} v_{m,0}=1-r$. Thus,
    by \cite[Lemma 1]{Backhausz},
    \begin{align*}
        \liminf_{m\to\infty} \frac{N_{m,0}}{m}\geq \frac{v_0}{1+u}=
        \frac{1-r}{1+r}=:a_0 \quad \text{almost surely}.
    \end{align*}
    Now, suppose that \eqref{eq:liminfpq1} holds for 
    all $0\le j \le k-1$. By a direct computation, or \eqref{eq:new_nmk_diff} with $p=q=1$, 
    \begin{align*}
        \E[N_{m,1}\mid G_{m-1}]&= N_{m-1,1} \bbclr{1-\frac{r}{m-1}} + r\bbclr{1+\frac{N_{m-1,0}}{m-1}};\\ 
         \E[N_{m,k}\mid G_{m-1}]&= N_{m-1,k} \bbclr{1-\frac{r}{m-1}} + r\frac{N_{m-1,k-1}}{m-1},\qquad k\geq 2.
    \end{align*}
    We take $\xi_{m,k}=N_{m,k}$, $u=r$ and $  v_{m,k} =  r\bclr{\mathbf{1}[k=1]+N_{m-1,k-1}/(m-1)}$ in the application of \cite[Lemma 1]{Backhausz}. 
    By the induction hypothesis, 
    \begin{align*}
        \liminf_{m\to\infty} v_{m,k} \ge v_k := r(\mathbf{1}[k=1]+a_{k-1}) \quad\text{almost surely.}
    \end{align*}
    Hence, by \cite[Lemma 1]{Backhausz} 
    and the definition of $a_k$ in \eqref{ak}, 
    \begin{align*}
        \liminf_{m\to\infty} \frac{N_{m,k}}{m}\geq \frac{v_k}{1+u} = \frac{r}{1+r} (\mathbf{1}[k=1]+a_{k-1})= \bbclr{\frac{r}{1+r}}^k (1+a_0)=:a_k.
    \end{align*}
    This completes the induction and therefore proves \eqref{eq:liminfpq1}. Furthermore, 
    \begin{align*}
        \sum_{k\ge 1} a_k = (1+a_0)\sum_{k\ge 1} \bbclr{\frac{r}{1+r}}^k = \frac{2}{1+r} \bbclr{\frac{1}{1-r/(1+r)}-1}= \frac{2r}{1+r},
    \end{align*}
    implying that
    $\sum_{k\ge 0} a_k= a_0+2r/(1+r)= 1$.
\end{proof}

\begin{proof}[Proof of Proposition \ref{prop: updatingpq}] We prove each result separately. Under the assumptions in Item 1, we apply the inequalities $N_{m-1,1}/N_{m-1}\le 1$ and $\sum_{j\ge 1}p^j_{m-1} N_{m-1,j}/N_{m-1}\le p_{m-1}$ to
 \eqref{eq:new_nm0_diff3} so that 
   \begin{align}\label{r1}
       \E[N_{m,0}\mid G_{m-1}]\le 
   N_{m-1, 0} - R_{m-1} 
 +(1-r_{m-1})p_{m-1}q_{m-1} 
 - (1-q_{m-1})  \frac{N_{m-1,0}}{N_{m-1}}, 
   \end{align}
where
   \begin{align*}
    R_{m-1}&:=(2r_{m-1}-1)q_{m-1}\frac{N_{m-1,0}}{N_{m-1}} -2(1-q_{m-1})=\bigg((2r_{m-1}-1)\frac{N_{m-1,0}}{N_{m-1}}+2\bigg)q_{m-1} -2.\end{align*}
For $q_m$ as in \eqref{eq:qm},     
\begin{align*}
    R_{m-1}
    \geq \bigg((2r_{m-1}-1)\frac{N_{m-1,0}}{N_{m-1}}+2\bigg) \bbclr{1+\frac{2r_{m-1}-1}{2}\frac{N_{m-1,0}}{N_{m-1}}}^{-1} - 2= 2-2=0,
\end{align*}
 If $r_{m-1}=1$ then $ \E[N_{m, 0}\mid G_{m-1}]\le  N_{m-1, 0} -  R_{m-1}\le  N_{m-1, 0}.$ Also, if $r_{m-1}<1$, then $p_{m-1}$ in \eqref{eq:pm} is chosen 
so that 
\begin{align*}
    0\leq (1-r_{m-1})q_{m-1} p_{m-1} \leq R_{m-1}.
\end{align*}
Therefore, $\E (N_{m,0}\mid G_{m-1}) \le N_{m-1,0}{-R_{m-1}+ (1-r_{m-1})p_{m-1}q_{m-1}}\le N_{m-1,0}$ if $r_m<1$, 
   showing that $N_{m,0}$ is a (non-negative) supermartingale. It follows that $N_{m,0}$ converges almost surely to some limit $\rho_0$ such that $\E\rho_0\le  \E N_{m_0,0}$.
    
    To show convergence in probability,  for any fixed $\eps>0$,
    \begin{align*}
        \IP\bigg( \frac{N_{m,0}}{N_m} >\eps\bigg)
        &\le  \IP\bigg(N_{m,0} >\frac{\eps m}{2} \bigg) +  \IP\bigg(N_m< \frac{m}{2} \bigg).\numberthis\label{eq:goodbadevents}
    \end{align*} 
Since $q_m\ge 2/3$ for all $m\ge m_0$, $N_{m}$ stochastically dominates a Bin$(m-m_0,2/3)$ random variable. 
By applying Markov's inequality and Chernoff's inequality for binomial distribution (see e.g.\ \cite{okamoto1959}) to the right-hand side of \eqref{eq:goodbadevents},  
     there is a positive constant $C$  such that
    \begin{align}
         \IP\bigg( \frac{N_{m,0}}{N_m} >\eps\bigg)< \frac{2\E N_{m_0,0}}{m\eps} + e^{-Cm} \rightarrow  0 \quad \text{as $m\to\infty$.} \label{eq:MC}
    \end{align}
    Thus, 
    $N_{m,0}/N_m\to 0$ in probability as $m\to\infty$. 

    The proof of the assertion under the assumptions in Item 2
    is similar.  By \eqref{r1} and letting 
     \begin{align*}
    {\widetilde R}_{m-1}&:= R_{m-1}+(1-q_{m-1})\frac{N_{m-1,0}}{N_{m-1}} =q_{m-1}\bbclr{\frac{N_{m-1,0}}{N_{m-1}}(2r_{m-1}-2)+2} + \frac{N_{m-1,0}}{N_{m-1}} -2,
    \end{align*}
     it follows that
   \begin{align}\label{r2}
    \E[N_{m, 0}\mid G_{m-1}]\le  N_{m-1, 0} - {\widetilde R}_{m-1} 
 +(1-r_{m-1})p_{m-1}q_{m-1}. 
   \end{align}
    If $r_{m-1}=1$ then $ \E[N_{m, 0}\mid G_{m-1}]\le  N_{m-1, 0} - {\widetilde R}_{m-1}.$ So for  $q_{m-1}$ as in \eqref{eq:qmrm},
    \begin{align*}
        {\widetilde R} _{m-1}
        &\geq \bbclr{2-\frac{N_{m-1,0}}{N_{m-1}}}\bbclr{\frac{N_{m-1,0}}{N_{m-1}}(2r_{m-1}-2)+2} \bbclr{2-2(1-r_{m-1})\frac{N_{m-1,0}}{N_{m-1}}}^{-1} + \frac{N_{m-1,0}}{N_{m-1}} -2 \\
        &= 2-\frac{N_{m-1,0}}{N_{m-1}}+ \frac{N_{m-1,0}}{N_{m-1}} -2 = 0,
    \end{align*}
   and thus $\E [ N_{m,0}\mid G_{m-1} ]\le N_{m-1,0}.$
If $r_{m-1}<1$, for $p_{m-1}$ as in \eqref{eq:pm1} we also have
    \begin{align*}
    {  (1-r_{m-1})q_{m-1}  p_{m-1} \bbclr{1-\frac{N_{m-1,0}}{N_{m-1}}}\le  (1-r_{m-1})q_{m-1}  p_{m-1} \leq } {\widetilde R}_{m-1}.
    \end{align*}
  Plugging the last two inequalities into \eqref{r2} yields $\E(N_{m,0}\mid G_{m-1})\leq N_{m-1,0}$, 
  and so $N_{m,0}$ is a non-negative supermartingale. To prove $N_{m,0}/N_m\to 0$ in probability, note that $N_m$ stochastically dominates a $\mathrm{Bin}(m-m_0, 1/2)$ variable in this case. An argument similar to that in  \eqref{eq:goodbadevents} and \eqref{eq:MC} then shows that $N_{m,0}/m\to 0$ in probability as $m\to\infty$. 
\end{proof}

\begin{proof}[Proof of Proposition \ref{prop:modelc2}]
As in the previous proofs, we start from the recursion formula \eqref{eq:new_nm0_diff3} for $N_{m,0}$ and use $N_{m-1,1}/N_{m-1} \le 1 $ and $ \sum_{j \ge 1}p_{m-1}^{j} N_{m-1,j}/N_{m-1} \le  p_{m-1}$ to obtain
\begin{multline*}
 \mathbb{E}( N_{m, 0} \mid G_{m-1}) 
    \le N_{m-1,0} \left( 1 + \frac{q_{m-1}(1-2r_{m-1}) - (1 - q_{m-1})}{N_{m-1}}\right) 
    \\ +p_{m-1} q_{m-1}(1-r_{m-1})+ 2 (1-q_{m-1}).
\end{multline*}
Note that $q_k(1-2r_k) - (1-q_k) = 2q_k(1-r_k) - 1 \le 0$ as long as $r_k \ge   1/2$ or $q_k \le (2(1-r_k))^{-1}$, which is assumed to hold for all $k\ge m_0$. Hence for all $m\ge m_0+1$,
\begin{align} \label{eq:supermg1}
 \mathbb{E}( N_{m, 0} \mid \cF_{m-1}) 
    &\le N_{m-1,0} 
 +p_{m-1} q_{m-1}(1-r_{m-1})+ 2 (1-q_{m-1}).
\end{align}
Iterating \eqref{eq:supermg1}, we obtain
    \begin{equation}
           \E(N_{m,0}) \le  N_{m_0,0} + \sum^{m-1}_{k=m_0}\{p_kq_k(1-r_k)+ 2(1-q_k)\}.
    \end{equation}
    So if $\sum_{k\ge   m_0}\{p_k(1-r_k)+ 2(1-q_k)\}<\infty$,  $\E(N_{m,0})$ is bounded in $m$. The claim that $N_{m,0}/m\to 0$ in probability follows immediately from Markov's inequality. 
    
    To show that $N_{m,0}/N_m\to 0$ in probability also, we write 
\begin{align}\label{eq:prod}
    \frac{N_{m,0}}{N_m} = \frac{N_{m,0}}{m} \frac{m}{N_m}.
\end{align}
As $N_m$ is distributed as $m_0$ plus a sum of independent Bernoulli variables, each with success probability $q_k$, it follows that 
$$\E N_m = m_0 + \sum_{k={m_0}}^{m{-1}} q_k;  \quad \mathrm{Var} (N_m) = \sum_{k={m_0}}^{m{-1}} q_k (1-q_k) \le \sum_{k={m_0}}^{m{-1}}  (1-q_k).$$ Under \eqref{eq:pqr}, $\sum_{k\ge m_0}(1-q_k)<\infty$, so there is a constant $C>0$ such that $\mathrm{Var} (N_m) \le C$ for all $m$, and $\lim_{m\to\infty}\E( N_m)/m=\lim_{m\to\infty}m^{-1}\sum^{m-1}_{k= m_0}(1-(1-q_k))=1$. For any $\eps>0$, 
\begin{align*}
    \IP\bclr{|N_m/m - 1 |\ge 2\eps} &\le   \IP\bclr{|N_m-\E N_m|/m + |\E N_m/m-1|\ge 2\eps}.
\end{align*}
Thus by choosing $m$ large enough such that $|\E N_m/m-1|\le \eps$ and using Markov's inequality,
\begin{align}
      \IP\bclr{|N_m/m - 1 |\ge 2\eps}\le \IP\bclr{|N_m-\E N_m|/m \ge \eps} \le \frac{\var (N_m)}{m^2\eps^2} \le \frac{C}{m^2\eps^2} \quad \text{as $m\to\infty$.}
\end{align}
On the other hand, $\E(N_{m,0})$ is bounded in $m$ and so $N_{m,0}/m \to 0$ in probability as $m \to \infty$. Thus, by Slutsky's Theorem and \eqref{eq:prod}, ${N_{m,0}}/{N_m} \to 0$ in probability as $m \to \infty$.
\end{proof}

\begin{proof}[Proof of Proposition \ref{prop:recursion}]
We start by proving \eqref{eq:new_nm0_diff}, the recursion for the number of isolated vertices. Below, we refer to the uniformly chosen vertex and the newly added vertex in each duplication-divergence step respectively as a \emph{parent} and a \emph{child}. We start by enumerating the (not necessarily mutually exclusive) events that lead to changes in the count of isolated vertices.
    \begin{enumerate}
        \item {Given that the parent is isolated}, 
        {the child and the parent either remain isolated,  with probability $1-r$; or they form an (isolated) edge,  with probability $r$.}
        \item Given that the parent has degree $j \geq 1$, the child does not connect to $v$ and nor to any of the $j$ neighbours of $v$; this happens with probability $(1-r)p^{j}$. 
        \item A non-isolated vertex is chosen for a deletion-addition step and hence becomes isolated.  
        \item A vertex of degree one has its only neighbour chosen for a deletion-addition step, causing both the neighbour and the vertex to become isolated. Given $G_{m-1}$, this occurs with conditional probability $(1-q)N_{m-1,1}/N_{m-1}$.   
    \end{enumerate}
Furthermore, in Model A, a new isolated vertex is added to the existing network in a deletion-addition step, regardless of the degree of the uniformly chosen vertex.  
Combining these events, 
we obtain
\begin{align*}
    \E[N_{m,0}\mid G_{m-1}] &= N_{m-1,0} - qr\frac{N_{m-1,0}}{N_{m-1}}+ q(1-r)\frac{N_{m-1,0}}{N_{m-1}} + q(1-r)\sum^{m-2}_{j =    1}\frac{N_{m-1,j}}{N_{m-1}}p^j\\
    &\quad + (1-q) \sum^{m-2}_{j = 1}\frac{N_{m-1,j}}{N_{m-1}} + (1-q)\frac{N_{m-1,1}}{N_{m-1}} + (1-q);
\end{align*}
noting that $\sum_{j \ge   1}N_{m-1,j}=N_{m-1}-N_{m-1,0}$ and rearranging the terms proves \eqref{eq:new_nm0_diff}. 

For $N_{m,k}$, $k \ge   1$, we start by listing the (not necessarily mutually exclusive) events that add to the count of vertices of degree $k$.
\begin{enumerate}
    \item The parent has degree $k-1$ just before undergoing the duplication-divergence step. The parent will have
    degree $k$ if it forms an edge with the new vertex. 
    Additionally, the child will have degree $k$ if it connects to its parent and all neighbours of its parent. 
Given $G_{m-1}$, the expected number of vertices of degree $k$ gained in this way is 
    \begin{equation}\label{eq:dup1}
        q\big(r+r(1-p)^{k-1}\big)\frac{N_{m-1,k-1}}{N_{m-1}}.
    \end{equation}
    
    \item A neighbour $v$ to some vertex $u$ of degree $k-1$ is selected as a parent, and $u$ gains a new edge if the child of $v$ connects to $u$. Given $G_{m-1}$, the expected number of vertices of degree $k-1$ that gain an edge in this way is
    \begin{equation}\label{eq:secondary}
    \frac{q(1-p)(k-1)N_{m-1,k-1}}{N_{m-1}}.
    \end{equation}
    
    \item Given that the parent has degree $j \geq {k}$, the number of its neighbours connecting to the child has the $\mathrm{Bin}(j, 1-p)$ distribution. In this case, the probability that the child has degree $k$ after the duplication-divergence step, given $G_{m-1}$, is thus
    \begin{multline}\label{eq:deggk}
    qr\sum^{m-1}_{j= k} \frac{N_{m-1,j}}{N_{m-1}} \binom{j}{k-1} p^{j-k+1} (1-p)^{k-1}
   + q(1-r)\sum^{m-1}_{j= k} \frac{N_{m-1,j}}{N_{m-1}} \binom{j}{k} p^{j-k} (1-p)^{k}.
   \end{multline}

\item If a neighbour $v$ of some vertex $u$ of degree $k+1$ is uniformly selected for a deletion-addition step, vertex $u$ has degree $k$ after the edge between $u$ and $v$ is deleted. Given $G_{m-1}$, the expected number of vertices of degree $k+1$ that lose an edge in this way is 
\begin{equation}
\label{eq:increaseDel}
    (1-q)(k+1)\frac{N_{m-1,k+1}}{N_{m-1}}.
\end{equation}
\end{enumerate}
The count of vertices of degree $k$ can decrease too, if one or more of the following events happen.
\begin{enumerate}
    \item The parent has degree $k$ and forms an edge with the new vertex. {Given $G_{m-1}$, the probability is}
    \begin{equation}\label{eq:del1}
        \frac{qrN_{m-1,k}}{N_{m-1}}.
    \end{equation}

    \item {At a duplication-divergence step, the parent is a neighbour of some vertex $u$ of degree $k$, and the child forms an edge with $u$, adding one to the degree of $u$. Given $G_{m-1}$, the expected number of vertices of degree $k$ that gain an edge in this way is}
    \begin{equation}\label{eq:del2}
       q(1-p)\frac{kN_{m-1,k}}{N_{m-1}}.
    \end{equation}

    \item A vertex of degree $k$ or its neighbour is selected for the deletion-addition step. Given $G_{m-1}$, the expected number of vertices of degree $k$ lost in this way is
    \begin{equation}\label{eq:del3}
        (1-q) \left(\frac{N_{m-1,k}}{N_{m-1}} + \frac{kN_{m-1,k}}{N_{m-1}} \right).
    \end{equation}
\end{enumerate}

Assembling the terms \eqref{eq:dup1}--\eqref{eq:del3}, and noting the contributions from \eqref{eq:del1}, \eqref{eq:del2}, and \eqref{eq:del3} are negative then yield \eqref{eq:new_nmk_diff}.

The proof of the corresponding statements for Model B is a straightforward adaptation and hence omitted.
\end{proof}

\begin{proof}[Proof of Proposition \ref{prop:isolatedbound}]
Let $\widetilde \cN^{(k)}_{m,j}$ be the set of vertices in Model A with \emph{exactly} $j$ neighbours of degree $k$ at time $m$, where $j,k \ge  1$,
and let $\widetilde N^{(k)}_{m,j}:=|\widetilde \cN^{(k)}_{m,j}|$. The sets $\widetilde \cN^{(k)}_{m,j}$ and $\widetilde \cN^{(k)}_{m,l}$ are disjoint for $j \ne l,$ and so 
$\cup_{j=0}^{m-1}  \widetilde \cN^{(k)}_{m,j} = [m]$. To prove \eqref{eq:isolatedbd}, we first condition on the duplication-divergence step and the deletion-and-addition step to get
\begin{align}
    \E[(N_{m,0}-N_{m-1,0})^2\mid G_{m-1}]
    &= q 
    \E[(N_{m,0}-N_{m-1,0})^2\mid G_{m-1}; \mbox{duplication-and-divergence}] \notag \\
    &+ (1-q)  \E[(N_{m,k}-N_{m-1,k})^2\mid G_{m-1}; \mbox{deletion-and-addition}]. \label{eq:ddsplit}
\end{align}
To bound the conditional expectations on the right-hand side of \eqref{eq:ddsplit}, we observe that, given $G_{m-1}$, the change in the count of isolated vertices is
\begin{enumerate}
\item $-1$, $0$, or $+1$ during a duplication-divergence step. If the parent is an isolated vertex, the change is $-1$ if a parent-child link is formed; otherwise, it is $+1$. If the parent is a non-isolated vertex and the child fails to connect to its parent, the change is $+1$, otherwise it is 0; 
\item \label{item:sq2} $+(k+2)$, with $k \geq 0$, when a vertex in $\mathcal{\widetilde{N}}^{(1)}_{m,k}\setminus \mathcal{{N}}_{m-1,0}$ is chosen for a deletion-addition step, as during a deletion-addition step,  a new isolated vertex is added and the chosen vertex becomes isolated too. The conditional probability of this event, given $G_{m-1}$, is at most $(1-q)\widetilde{N}^{(1)}_{m-1,k}/N_{m-1}$; 
\item $+1$ when an isolated vertex is selected for a deletion-addition step.
\end{enumerate}

The first item implies that $\E[(N_{m,0}-N_{m-1,0})^2\mid G_{m-1}; \mbox{duplication}]\le 1$. Combining this and the bounds implied by the other items, \eqref{eq:ddsplit} can be bounded as
\begin{align*}
    \E[(N_{m,0}-N_{m-1,0})^2\mid G_{m-1}]
     \le q +(1-q)\sum^{m-2}_{k = 0}{(k+2)}^2\frac{\widetilde N^{(1)}_{{m-1},k}}{m-1} + (1-q)\frac{N_{m,0}}{{m-1}};  
\end{align*}
applying 
${N_{m,j}} \le 
m$,  $\widetilde N^{(1)}_{m,k}=\sum^{m-1}_{j =    k} |\widetilde \cN^{(1)}_{m,k}\cap \cN_{m,j}|$ and $(x+1)^2\leq 5x^2+4$ for non-negative integers $x$, we find
\begin{align*}
      \E[(N_{m,0}-N_{m-1,0})^2\mid G_{m-1}] &\le (1-q) \sum^{m-2}_{k =   0 }\sum^{m-2}_{j =   0} (k+2)^2\frac{|\widetilde \cN^{(1)}_{{m-1},k}\cap \cN_{{m-1},j}|}{{m-1}} \mathbf{1}[j\ge   k]  + 1
    \nonumber \\
    &\le   (1-q)\sum^{m-2}_{j =   0} \sum^{m-2}_{k =   0} (j+2)^2\frac{|\widetilde \cN^{(1)}_{{m-1},k}\cap \cN_{{m-1},j}|}{{m-1}}  + 1 \\
     &\le  (1-q)\sum^{m-2}_{j =   0}(j+2)^2\frac{ N_{{m-1},j}}{{m-1}} + 1\\
 &\le 5(1-q)  \sum^{m-2}_{j= 0}  j^2 \frac{ N_{{m-1},j}}{{m-1}} + 5,
\end{align*}
which is  \eqref{eq:isolatedbd}. For $k\ge 2$, we again consider the cases of duplication-and-divergence and deletion-and-addition separately. Denote by $V$ the parent chosen at time $m$, and recall that the parent-child link is formed independently of the other edges of the child. Hence,
\begin{eqnarray*}
\lefteqn{q 
    \E[(N_{m,k}-N_{m-1,k})^2\mid G_{m-1}; \mbox{duplication-divergence}]} \\
    &=&
    q \sum_{l=0}^{m-2} \sum_{j=0}^{m-2} \sum_{s=0}^{m-2} 
    \PP \big(V \in \cN_{m-1, l} \cap \widetilde{\cN}_{m-1,j}^{(k)} \cap \widetilde{\cN}_{m-1,s}^{(k-1)}\mid  G_{m-1} \big) \mathbf{1}[j + s \le l] \\
    && 
\quad \cdot  \E\big[(N_{m,k}-N_{m-1,k})^2\mid G_{m-1}; \mbox{duplication-divergence};V \in \cN_{m-1, l}\cap \widetilde{\cN}_{m-1,j}^{(k)}\cap \widetilde{\cN}_{m-1,s}^{(k-1)}\big] \\
 &=&
    q \sum_{l=0}^{m-2} \sum_{j=0}^{m-2} 
    \sum_{s=0}^{m-2} 
    \frac{\big|  \cN_{m-1, l}\cap \widetilde{\cN}_{m-1,j}^{(k)}\cap \widetilde{\cN}_{m-1,s}^{(k-1)}\big|}{{m-1}}\mathbf{1}[j + s \le l]
\sum_{a=0}^j \sum_{b=0}^s \sum_{c=0}^{l-j-s} p(a,b,c)   \\
&& \quad \cdot\big\{r\big[ \mathbf{1}( l=k-1) - \mathbf{1}( l=k)+ \mathbf{1}(a+b+c+1=k)  -a + b \big]^2+   (1-r)\big[ \mathbf{1}(a+b+c=k)  -a + b \big]^2\big\}.
\end{eqnarray*}
 where $p(a,b,c)$ is the probability of the event that the child retains $a$ edges to neighbours of $V$ with degree $k$, $b$ edges to those with degree $k-1$, and $c$ edges to other neighbours of $V$. 
 For $0 \le a,b \le  l$, 
 \begin{align*}
     \big( \mathbf{1}( l=k-1) - \mathbf{1}( l=k)+ \mathbf{1}(a+b+c+1=k)  -a + b \big)^2 
 &\le  (b-a)^2 + 4|b-a|  + 4 \\
 &\le l^2 + 4l + 4 \le 5l^2 + 4;
 \end{align*}
 where the last inequality is valid for non-negative integers $l$. Similarly, $\big(\mathbf{1}(a+b+c=k)  -a + b \big)^2 \le 3l^2 +1.$ Thus, noting that $\sum^j_{a=0}\sum^s_{b=0}\sum^{l-j-s}_{c=0}p(a,b,c)=1$, 
 \begin{eqnarray}\label{dd}
    \lefteqn{q\E[(N_{m,k}-N_{m-1,k})^2|G_{m-1}; \mbox{duplication}]}\notag\\ 
    &\le & q \sum_{l=0}^{m-2} (5l^2+4)\sum_{j=0}^{m-2} 
    \sum_{s=0}^{m-2} 
    \frac{\big|  \cN_{m-1, l}\cap \widetilde{\cN}_{m-1,j}^{(k)}\cap \widetilde{\cN}_{m-1,s}^{(k-1)}\big|}{{m-1}}\mathbf{1}[j + s \le l]\cdot \notag\\
    &=& 5q\sum_{l=0}^{m-2} l^2
 \frac{  N_{m-1, l}}{{m-1}} + 4q . 
\end{eqnarray}

On the other hand, the overall change in the count of vertex of degree $k$ is $s-j-\mathbf{1}[l=k]$, if a vertex in $\cN_{m-1,l}\cap \mathcal{\widetilde N}^{(k)}_{m-1,j}\cap \mathcal{\widetilde N}^{(k+1)}_{m-1,s} $ is selected for deletion-addition. So by a similar argument, we obtain
\begin{eqnarray}\label{da}
 \lefteqn{  (1-q)  
    \E[(N_{m,k}-N_{m-1,k})^2\mid G_{m-1}; \mbox{deletion}]}\notag\\
    &=& 
  (1-q)  \sum_{l=0}^{m-1} \sum_{j=0}^{m} 
    \sum_{s=0}^m 
    \frac{\big|  \cN_{m-1, l}\cap \widetilde{\cN}_{m-1,j}^{(k)}\cap \widetilde{\cN}_{m-1,s}^{(k+1)}\big|}{{m-1}}
  \mathbf{1}[j + s \le l]\cdot (s-j-\mathbf{1}[l=k])^2 \notag \\
&\le &  (1-q)  \sum_{l=0}^{m-2} 
(3l^2+1)\sum_{j=0}^{m-1} \sum_{s=0}^{m-1} 
    \frac{\big|  \cN_{m-1, l}\cap \widetilde{\cN}_{m-1,j}^{(k)}\cap \widetilde{\cN}_{m-1,s}^{(k+1)}\big|}{{m-1}}
  \mathbf{1}[j + s \le l] \notag \\ &\le &  3(1-q)  \sum_{l=0}^{m-2} l^2
    \frac{ N_{m-1, l}}{{m-1}} + 1-q.  
\end{eqnarray}
By \eqref{dd} and \eqref{da}, 
\begin{eqnarray*}
      \E[(N_{m,k}-N_{m-1,k})^2\mid G_{m-1}
      ]
 &\le &  (3+2q) \sum_{l=0}^{m-2} l^2 
 \frac{  N_{m-1, l}}{{m-1}} + 4,
\end{eqnarray*}
proving \eqref{eq:sqdiffk}. 
\end{proof}

\begin{proof}[Proof of Proposition \ref{lemmafirstmoment}]
We start by proving the claim for $\psi_1(m)$. Clearly,
    \begin{align}\label{eq:mom1ineq}
    m^{-1}\E [N_{m,k}] \le (m-1)^{-1}\E[\E [N_{m,k}\mid G_{m-1}]].
\end{align}
By the recursion formula \eqref{eq:new_nmk_diff} for $N_{m,k}$ with $k \ge   1$,
\begin{align}
\label{eq:new_nmk_diff3}
   &\frac{\mathbb{E}\left[{N_{m, k} }  \mid G_{m-1}\right]}{{m-1}} = \frac{N_{m-1, k} }{{m-1}}  +  \frac{N_{m-1,k-1}}{(m-1)^2}q\left(r+r(1-p)^{k-1}+(1-p)(k-1)\right) \nonumber \\
   &\quad -\frac{N_{m-1,k}}{(m-1)^2}q\big(r+(1-p)k\big)+q(1-r) \sum^{m-2}_{j =   k}\frac{N_{m-1,j}}{(m-1)^2} {j \choose k}p^{j-k}(1-p)^k\nonumber\\
    &\quad +q r\sum^{m-2}_{j =   k}\frac{N_{m-1,j}}{(m-1)^2} {{j} \choose {k-1}}p^{j-k+1}(1-p)^{k-1}  + (1-q) (k+1)\frac{N_{m-1,k+1}-N_{m-1,k}}{(m-1)^2}.
\end{align} 
Recall $\psi_i(m)$ in \eqref{de:psi}. By  
multiplying both sides of \eqref{eq:mom1ineq} by $k$, applying  \eqref{eq:new_nmk_diff3}, summing over $1\le k\le m-1$, and a straightforward simplification,  
\begin{align*}
     \psi_1(m) &\le  \psi_1(m-1)\\
    & \quad  + q\sum_{k=1}^{{m-1}}k\bbbclr{\frac{\E N_{m-1,k-1}}{ (m-1)^2}\big(r+{r(1-p)^{k-1}} + (1-p)(k-1)\big)-\frac{\E N_{m-1,k}}{(m-1)^2}}(r+(1-p)k)\\
    &\quad +q(1-r)\sum_{k=1}^{m-1}k\sum^{m-2}_{j= k}\frac{\E N_{m-1,j}}{(m-1)^2}  {j \choose k}p^{j-k}(1-p)^k  \\
    &\quad +(1-q)\sum_{k=1}^{m-1} k(k+1)\E\bbcls{\frac{N_{m-1,k+1}-N_{m-1,k}}{(m-1)^2}} \\
    &\quad +qr\sum_{k=1}^{m-1}k\sum^{m-2}_{j= k}\frac{\E N_{m-1,j}}{(m-1)^2} {{j} \choose {k-1}}p^{j-k+1}(1-p)^{k-1}\\
    &= \psi_1(m-1) + q r \sum^{m-2}_{k=0} \frac{\E N_{m-1,k}}{(m-1)^2} + \sum^{m-2}_{k=0} \frac{\E N_{m-1,k}}{(m-1)^2} k (q(1-p) -2(1-q)) \\
    &\quad + q(1-r)\sum^{m-2}_{j=1}\frac{\E N_{m-1,j}}{(m-1)^2}\sum^j_{k=1} {j \choose k}p^{j-k}(1-p)^k k\\
    &\quad + qr\sum^{m-2}_{j=0}\frac{\E N_{m-1,j}}{(m-1)^2}\sum^j_{k=0} {j \choose k}p^{j-k}(1-p)^k (k+1),    \numberthis\label{psi1}
\end{align*}
Noting that $ \sum^j_{k=0} {j \choose k}p^{j-k}(1-p)^k k = j (1-p)$, another routine calculation yields
\begin{align*}
   \psi_1(m) &\le \psi_1(m-1) +
   2 [ q(1-p) - (1-q)]
   \sum^{m-2}_{k=0} k \frac{\E N_{m-1,k}}{(m-1)^2} +  2qr \sum^{m-2}_{k=0} \frac{\E N_{m-1,k}}{(m-1)^2}\\
     &= \psi_1(m-1) +\bclr{4q-2pq-2} \sum^{m-2}_{k=0} k \frac{\E N_{m-1,k}}{(m-1)^2} + \frac{2 qr}{m-1} \\
   & = \psi_1(m-1) \left( 1 +\frac{\kappa}{m-1}  \right) +  \frac{2 qr}{m-1}, \numberthis\label{eq:genpsi2}
\end{align*}
where $\kappa:=4q-2pq-2$, as in \eqref{kappa}. 
Iterating \eqref{eq:genpsi2} gives, for $m\ge m_0+1$, 
\begin{eqnarray*}
   \psi_1(m)     &\le& 
  \sum_{j=m_0}^{m-1} \frac{2qr}{j} \prod_{\ell=j+1}^{m-1} \left( 1 + \frac{\kappa}{\ell}\right) + \psi_1(m_0) \prod_{j=m_0}^{m-1} \left( 1 + \frac{\kappa}{j} \right) \\
  &=& \sum_{j=m_0}^{m-1} \frac{2qr}{j} \frac{\Gamma( m + \kappa)}{\Gamma ( j + \kappa +1)}
\frac{\Gamma(j + 1 )}{\Gamma ( m)}
  + \psi_1(m_0)  \frac{\Gamma( m + \kappa)}{\Gamma ( m_0+\kappa )}
\frac{\Gamma( {m_0} )}{\Gamma ( m)}.
\end{eqnarray*}
 Since $\Gamma(m+\kappa)/\Gamma(m)=m^\kappa(1+O(m^{-1}))$ by Stirling's formula, it follows that 
\begin{align*}
\psi_1(m)\le Cm^\kappa \sum_{j=m_0}^{m-1} j^{-(1+\kappa) } + Cm^\kappa\le 
\begin{cases}
    C&\text{if $\kappa<0$;}\\
    C\log m&\text{if $\kappa=0$;}\\
    Cm^\kappa &\text{if $\kappa>0$.}
\end{cases}
\end{align*}

For $\psi_2(m)$ the argument is very similar. Repeating the steps for obtaining \eqref{psi1}, we get
\begin{align*}
     \psi_2(m) &\le  \psi_2(m-1)  + \sum_{k=0}^{m-2}\frac{\E N_{m-1,k}}{(m-1)^2}\bclc{q(k+1)^2(r+(1-p)k)-qk^2(r+(1-p)k)}\\
     &\quad + \sum_{k=2}^{m-2}\frac{\E N_{m-1,k}}{(m-1)^2}(1-q)((k-1)^2k-k^2(k+1))\\
     & \quad +q (1-r)\sum^{m-2}_{j=0}\frac{\E N_{m-1,j}}{(m-1)^2}\sum^j_{k=1}\binom{j}{k} p^{j-k}(1-p)^kk^2 \\
    &\quad + q r\sum^{m-2}_{j=0}\frac{\E N_{m-1,j}}{(m-1)^2}\sum^j_{k=0}\binom{j}{k} p^{j-k}(1-p)^k(k+1)^2 . 
\end{align*}
For $Y_j\sim \mathrm{Bin}(j,1-p)$, 
$\E[(Y_j+1)^2]=j^2(1-p)^2+jp(1-p)+2j(1-p)+1,$ and 
$ \E [Y_j^2] = j p(1-p) + j^2 (1-p)^2$. Thus, by a similar computation as for $\psi_1(m)$ and with $\tau$ as in \eqref{tau}, the last display can be simplified as
\begin{align*}
     \psi_2(m) &\le  \psi_2(m-1)  + \sum_{k=0}^{m-2}\frac{\E N_{m-1,k}}{(m-1)^2}\bclc{q(k+1)^2(r+(1-p)k)-qk^2(r+(1-p)k)}\\
     &\quad + \sum_{k=2}^{m-2}\frac{\E N_{m-1,k}}{(m-1)^2}(1-q)((k-1)^2k-k^2(k+1))\\
     & \quad +q (1-r)\sum^{m-2}_{k=1}\frac{\E N_{m-1,k}}{(m-1)^2} (k p(1-p) + k^2 (1-p)^2) 
    \\
    &\quad + q r\sum^{m-2}_{k=0}\frac{\E N_{m-1,k}}{(m-1)^2} (k^2(1-p)^2+kp(1-p)+2k(1-p)+1) \\
    &= \psi_2(m-1) \bbclr{1+\frac{\tau}{m-1}}  
     + \frac{\lambda}{m-1} \psi_1 (m-1) +\frac{2qr}{m-1},
\end{align*}
where we also set $\lambda =2qr+q(1-p)+1-q+qp(1-p)+2qr(1-p)$. Iterating over $m$ gives 
\begin{align}
   \psi_2(m) &\le  \psi_2(m_0) \prod_{j=m_0}^{m-1}  \left(1  + \frac{\tau }{j} \right) + \sum_{j=m_0}^{m-1} \frac{\lambda}{j} \psi_1(j) \prod_{k=j+1}^{m-1} \left(1  + \frac{\tau }{k} \right) 
    + \sum_{j=m_0}^{m-1}\frac{2qr}{j} \prod_{k=j+1}^{m-1} \left(1  + \frac{\tau }{k} \right)\notag\\
    &= \psi_2(m_0) \frac{\Gamma(m+\tau)\Gamma(m_0)}{\Gamma(m_0+\tau)\Gamma(m)} +  \sum_{j=m_0}^{m-1} \frac{\lambda}{j}  \frac{\Gamma(m+\tau)\Gamma(j)}{\Gamma(j+\tau)\Gamma(m)}\psi_1(j) +  \sum_{j=m_0}^{m-1} \frac{2qr}{j}  \frac{\Gamma(m+\tau)\Gamma(j)}{\Gamma(j+\tau)\Gamma(m)}.\label{eq:psi2check}
\end{align}
This time, we apply the different bounds for $\psi_1$ depending on the value of $\kappa$. When $\kappa<0$, $\psi_1(j)\leq C$, and so a calculation similar to those for obtaining \eqref{eq:psi1bd} yields the bounds in \eqref{b1}.
When $\kappa=0$, we use the bound $\psi_1(m)\leq C\log m$ for the second term in \eqref{eq:psi2check}. This gives  
\begin{align*}
    \sum_{j=m_0}^{m-1} \frac{\lambda}{j}  \frac{\Gamma(m+\tau)\Gamma(j)}{\Gamma(j+\tau)\Gamma(m)}\psi_1(j) \leq \sum_{j=m_0}^{m-1} \frac{C}{j}  \frac{\Gamma(m+\tau)\Gamma(j)}{\Gamma(j+\tau)\Gamma(m)} \log j\leq 
     \begin{cases}
         C\log m&\text{if $\tau<0$;}\\
         C\log^2 m&\text{if $\tau=0$;}\\
         C    m
         ^{\tau}
         &\text{if $\tau>0$,}
     \end{cases}
\end{align*}
which are the bounds in \eqref{b2}, and they dominate the  two other terms in \eqref{eq:psi2check}, regardless of the value of~$\tau$. Finally, if $\kappa>0$, $\psi_1(m)\leq Cm^\kappa$. Thus
\begin{align*}
      \sum_{j=m_0}^{m-1} \frac{\lambda}{j}  \frac{\Gamma(m+\tau)\Gamma(j)}{\Gamma(j+\tau)\Gamma(m)}\psi_1(j) \leq \sum_{j=m_0}^{m-1} \frac{C}{j}  \frac{\Gamma(m+\tau)\Gamma(j)}{\Gamma(j+\tau)\Gamma(m)} \log j\leq 
     \begin{cases}
          Cm^\kappa, &\text{if $\tau\leq 0$, $\kappa>\tau$;}\\
          Cm^\kappa &\text{if $\tau>0$ and $\kappa>\tau$;}\\
          Cm^\tau &\text{if $\tau>0$ and $\kappa<\tau$;}\\
          Cm^\tau \log m &\text{if $\tau=\kappa$.}
     \end{cases}
\end{align*}
Since the remaining terms in \eqref{eq:psi2check} can be bounded using the same upper bounds, this proves \eqref{b3}. 
\end{proof}

\section*{Funding}
GR is supported in part by EPSRC grant EP/T018445/1. TYYL is supported by the Sverker Lerheden Foundation, Knut and Alice Wallenberg Foundation, Ragnar  S\"oderberg Foundation and Swedish Research Council.

\section*{Appendix: Further Simulations}\label{Ssupp}

Here we present the simulations of Model A when $q=0.7,0.5,0.2$. The figures below show the evolution of the average proportion of isolated vertices of Model A over time, initialised either with a single edge or a combination of a single vertex and an edge. The simulations are run for 1000 steps, with 30 repeats. Different values of $p$ are represented by different colours; see each figure for the details. The \textbf{solid} lines represent parameter sets satisfying the conditions of Theorem \ref{th:LBk}, while the \textbf{dashed} lines correspond to cases where these conditions are not met. In particular, for $r = 0$, the theorem requires $q \leq 1/2$.  The shaded regions correspond to $\pm 2$ standard errors; capped at $0$ and $1$. The coloured bars on the right indicate the intervals $[\rho_0,(\rho_1\wedge 1)]$, where $\rho_0$ and $\rho_1$ are as in \eqref{de:rho} and \eqref{rho1}; \textbf{noting that $\rho_1$ does not depend on $p$ if $r=1$.}
Thus when $r=1$, the interval $[\rho_0,\rho_1]$ is represented by a single bar in magenta in the plots below, when applicable.

\begin{figure}[!h]
\centering 
   \includegraphics[width=0.85\textwidth, height=0.7\textwidth]{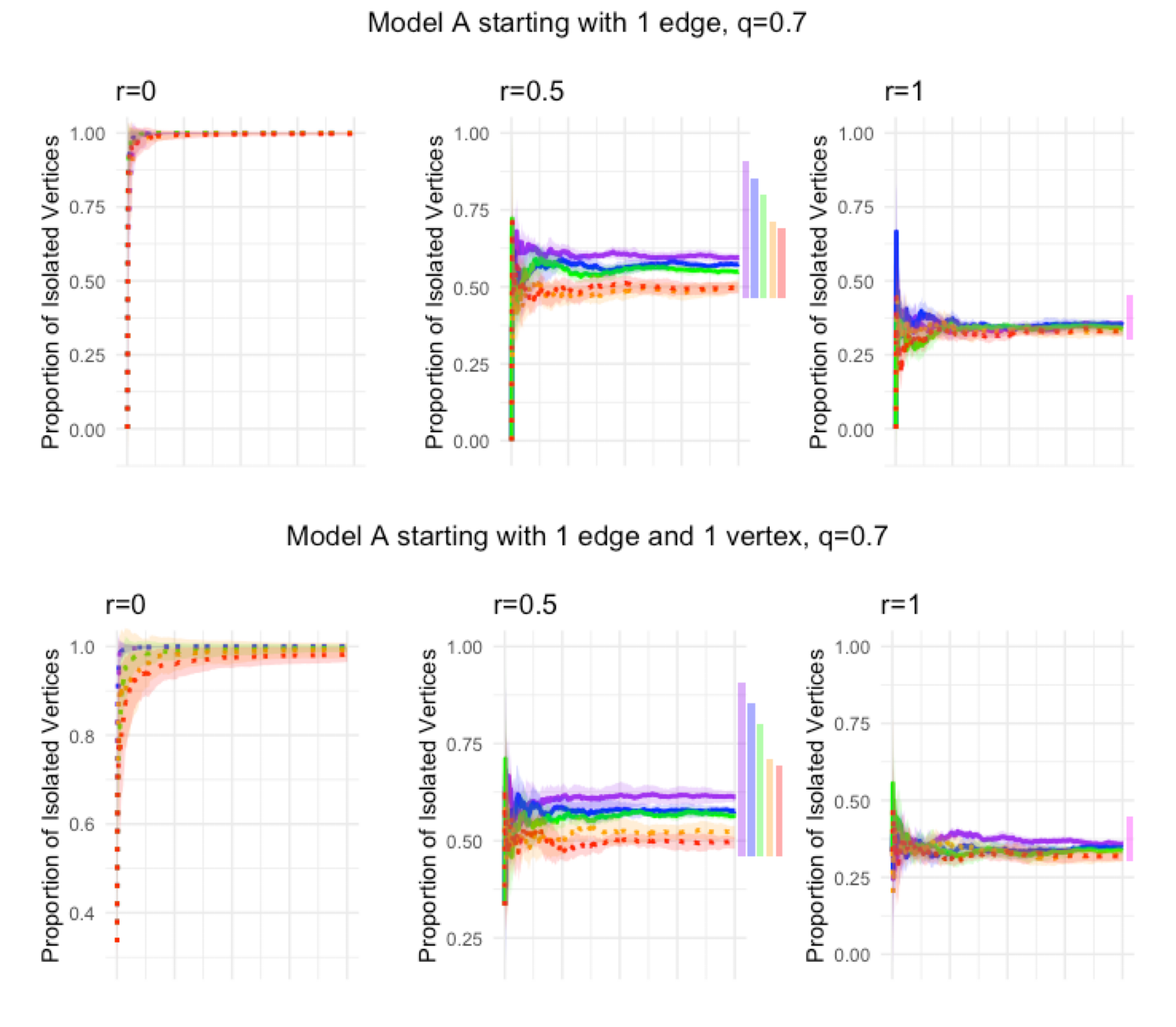} 
  \caption{\footnotesize 
  \textbf{Red}: $p=0$; \textbf{Orange}: $p=2 - \sqrt{182} / 7 \approx 0.0728$; \textbf{Green}: $0.4$; \textbf{Blue}: $p=0.6$; \textbf{Purple}: $p=0.8$. 
 For $r \neq 0$ and $q = 0.7$, Theorem 3.2 requires $p > 2 - \sqrt{182} / 7$, subject to $q \leq \min \{1, 1/(2(1-r))\}$. While the conditions for Theorem 3.2 are not met when $q=0.7$ $r=0$, the proportion of isolated vertices appear to converge to 1, even when $p$ is small. For $r=0.5,1$, the result of Theorem 3.2 seems to hold even when $p\le \sqrt{182}/7$ (in which case $\tau\ge 1$.) 
 }
  \label{fig:modelAq7}
  \vspace{-4mm}
\end{figure}

\begin{figure}[!h]
\centering 
   \includegraphics[width=0.82\textwidth, height=0.6\textwidth]{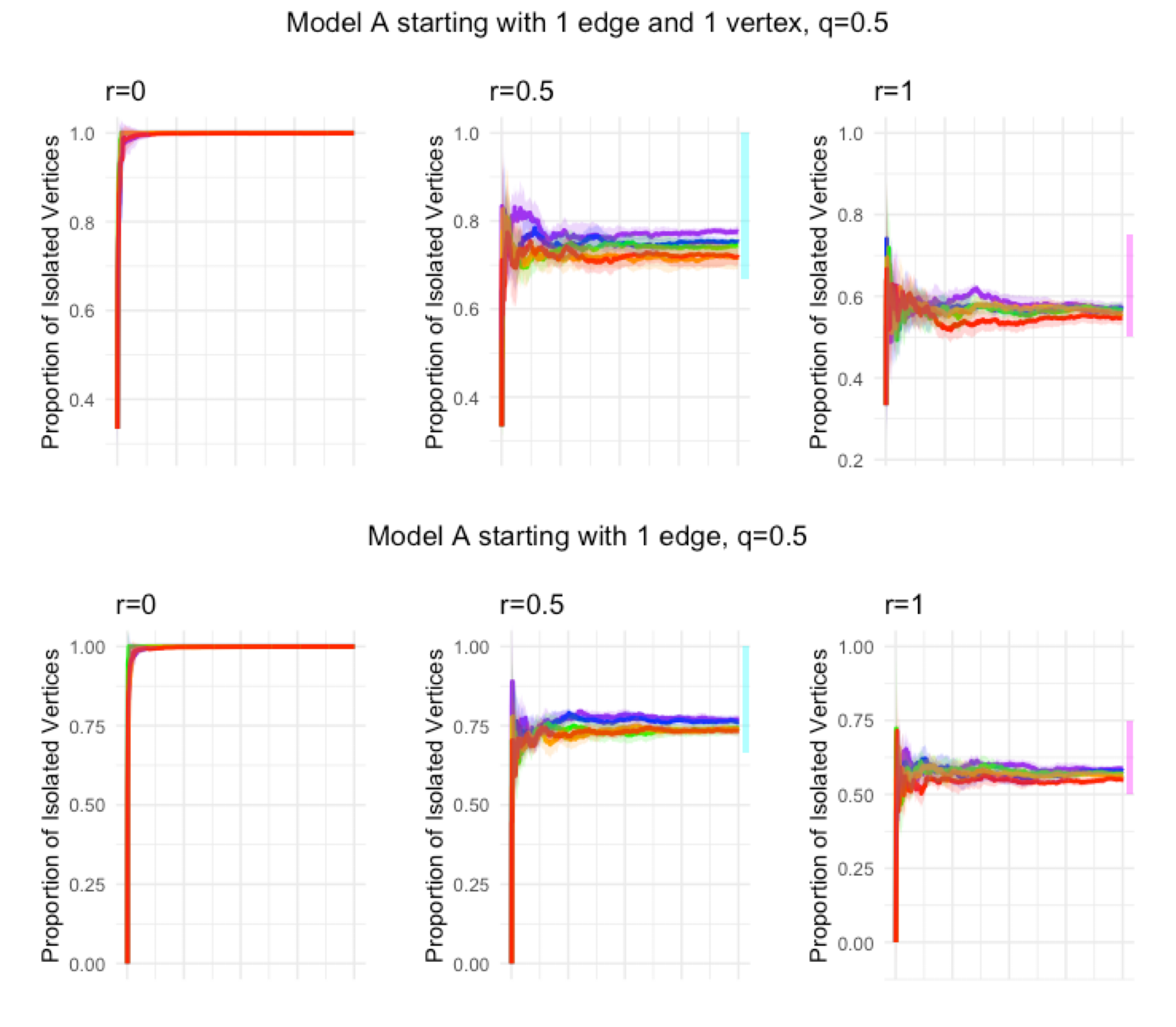} 
  \caption{\footnotesize  
  \textbf{Red}: $p=0$; \textbf{Orange}: $p=0.2$; \textbf{Green}: $0.4$; \textbf{Blue}: $p=0.6$; \textbf{Purple}: $p=0.8$. When $q=r=0.5$, $\rho_1\ge 1$ for all values of $p$ so we represent $[\rho_0,(\rho_1\wedge 1)]$ in this case by a single cyan bar. When $r=0.5,1$, the average proportion of isolated vertices appears to be closer to $\rho_0$ than to $\rho_1$, for all $p$ considered here. }
  \label{fig:modelAq5}
\end{figure}

\begin{figure}[!h]
\centering 
   \includegraphics[width=0.82\textwidth, height=0.6\textwidth]{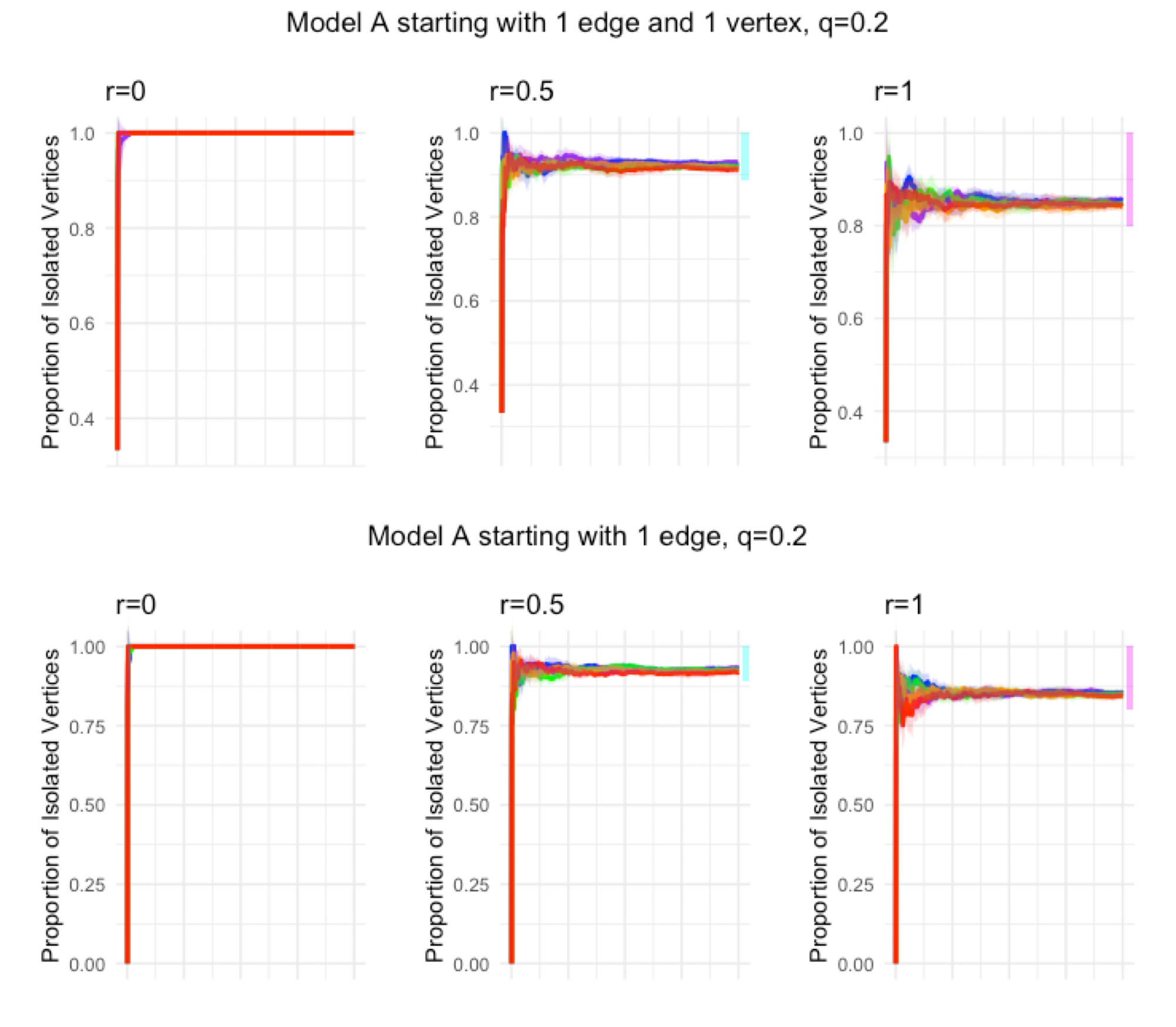} 
  \caption{\footnotesize  
  \textbf{Red}: $p=0$; \textbf{Orange}: $p=0.2$; \textbf{Green}: $0.4$; \textbf{Blue}: $p=0.6$; \textbf{Purple}: $p=0.8$; When $q=0.2$ and $r=0.5$, $\rho_1\ge 4/3$ for all values of $p$ so we represent $[\rho_0,(\rho_1\wedge 1)]$ in this case by a single cyan bar. When $q=0.2$ and $r=1$, $\rho_1=1.2\ge 1.$ When $r=0.5,1$, the average proportion of isolated vertices appears to be closer to $\rho_0$ than to $\rho_1$, for all $p$ considered here.}
  \label{fig:modelAq2}
\end{figure}

\clearpage


\bibliographystyle{abbrv} 
\bibliography{reference}       

\end{document}